\newtheorem{thm}{Theorem}
\newtheorem{remark}[thm]{Remark}
\newtheorem{prop}[thm]{Proposition}
\newtheorem{lemma}[thm]{Lemma}
\newtheorem*{theoremM}{Main Theorem}
\def\dim{\operatorname{dim}}
\def\Pic{\operatorname{Pic}}%
\def\Aut{\operatorname{Aut}}%
\def\Div{\operatorname{Div}}%
\def\Spec{\operatorname{Spec}}%
\def\Proj{\operatorname{Proj}}%
\def\Ext{\operatorname{Ext}}%
\title{ Non-reduced components of the Hilbert scheme of curves \\ using triple covers }
\author[Y. Choi]{Youngook Choi 
}
\address{Department of Mathematics Education, Yeungnam University, 280 Daehak-Ro, \hfill \newline\texttt{}
 \indent Gyeongsan, Gyeongbuk 38541, Republic of Korea}
\email{ychoi824@yu.ac.kr}
\author[H. Iliev]{Hristo Iliev 
}
\address{American University in Bulgaria, 2700 Blagoevgrad, Bulgaria, and \hfill \newline\texttt{}
 \indent Institute of Mathematics and Informatics, Bulgarian Academy of Sciences, \hfill \newline\texttt{}
 \indent 1113 Sofia, Bulgaria}
\email{ hiliev@aubg.edu, hki@math.bas.bg}
\author[S. Kim]{Seonja Kim 
}
\address{Department of Electronic Engineering, Chungwoon University, Sukgol-ro, Nam-gu, \hfill \newline\texttt{}
 \indent Incheon 22100, Republic of Korea}
\email{sjkim@chungwoon.ac.kr}
\thanks{The first author was supported by Basic Science Research Program through the National Research Foundation of Korea(NRF)
funded by the Ministry of Education(2019R1I1A3A01055643).
The second author was supported by Grant KP-06-N 62/5 of Bulgarian National Science Fund.
The third  author was supported by the National Research Foundation of Korea(NRF) grant funded by the Korea government(MSIT) (2022R1A2C1005977).}
\begin{document}

\setlength{\parindent}{5ex}

\begin{abstract}
In this paper we consider curves on a cone that pass through the vertex and are also triple covers of  the base of the cone, which is a general smooth curve of genus $\gamma$ and degree $e$ in $\mathbb{P}^{e-\gamma}$. Using the free resolution of the ideal of such a curve found by Catalisano and Gimigliano, and a technique concerning deformations of curves introduced by Ciliberto, we show that the deformations of such curves remain on cones over a deformation of the base curve. This allows us to prove that for $\gamma \geq 3$ and $e \geq 4\gamma + 5$ there exists a non-reduced component $\mathcal{H}$ of the Hilbert scheme of smooth curves of genus $3e + 3\gamma$ and degree $3e+1$ in $\mathbb{P}^{e-\gamma+1}$. We show that $\dim T_{[X]} \mathcal{H} = \dim \mathcal{H} + 1 = (e - \gamma + 1)^2 + 7e + 5$ for a general point $[X] \in \mathcal{H}$.
\end{abstract}

\subjclass[2020]{Primary 14C05; Secondary 14H10}
\keywords{Hilbert scheme of curves, ruled surfaces, triple coverings, curves on cones.}

\maketitle

\section{Introduction}\label{Sec1}

Let $\mathcal{I}_{d,g,r}$  denote the union of irreducible components of the Hilbert scheme whose general points correspond to smooth irreducible non-degenerate curves of degree $d$ and genus $g$ in $\mathbb{P}^r$. The minimal possible dimension that a component of $\mathcal{I}_{d,g,r}$ can have is $\lambda_{d,g,r} := (r+1)d - (r-3)(g-1)$. Recall that this number is called the \emph{expected dimension}. Note that $\lambda_{d,g,r} = h^0 (C, N_{C/\mathbb{P}^r}) - h^1 (C, N_{C/\mathbb{P}^r})$, where $N_{C/\mathbb{P}^r}$ is the normal bundle to a curve $C$ represented by a point of $\mathcal{I}_{d,g,r}$. For the tangent space to a component $\mathcal{H}$ of $\mathcal{I}_{d,g,r}$ at a point $[C] \in \mathcal{H}$ we have
\[
 \dim \mathcal{H} \leq \dim T_{[C]} \mathcal{H} = h^0 (C, N_{C/\mathbb{P}^r}) \, .
\]
If for a general $[C] \in \mathcal{H}$ we have equality, then the component is generically smooth. Whenever $\dim \mathcal{H} < \dim T_{[C]} \mathcal{H} $ at a general $[C] \in \mathcal{H}$, the component is non-reduced.

After obtaining two series of components of the Hilbert scheme $\mathcal{I}_{d,g,r}$ of curves in \cite[Theorem 4.3, Theorem 4.4]{CIK17}, we realized that the curves parametrized by them are found on cones, which allowed us to compute $h^0 (X, N_{C/\mathbb{P}^r})$ for a curve $C$ represented by a general point of such component. In this way we were able to strengthen some of the results proved in \cite{CIK17} and to describe the corresponding curves in a more geometric fashion. In \cite[Theorem A]{CIK21} we identified a series of generically smooth components of $\mathcal{I}_{2g-4\gamma+2, g, r}$ for every $\gamma \geq 10$ and $\gamma \leq r \leq g - 3\gamma + 2$, which extended \cite[Theorem 4.3]{CIK17}. In our paper \cite{CIK22} we found a series of non-reduced components of $\mathcal{I}_{2g-4\gamma+1, g, g-3\gamma+1}$ for every $\gamma \geq 7$ and $g \geq 6\gamma + 5$. We proved that the corresponding non-reduced components parametrize curves that lie on cones, pass through the vertex of the corresponding cone and are double covers of its general hyperplane section, which is a linearly normal nonspecial curve of genus $\gamma$. We remark that the non-reduced components from \cite{CIK22} are related to those in \cite[Theorem 4.4]{CIK17}.

In the present work we continue our study of smooth curves on cones that pass through the vertex of a cone and are $m$-covers, $m \geq 3$, of the hyperplane section of the cone. The main result in the paper concerns the case $m = 3$ and says that under suitable numerical assumptions such families of curves give rise to non-reduced components of the Hilbert scheme of curves.  It is formulated in the next theorem.

\begin{theoremM}\label{TheoremB3}
Assume that $e$ and $\gamma$ are integers such that $e \geq 4 \gamma +5$ and $\gamma \geq 3$. Let
\[
  g := 3\gamma + 3e \, , \qquad  d:= 3e + 1 \quad \mbox{ and } \quad r:= e - \gamma + 1 \, .
\]
Then the Hilbert scheme $\mathcal{I}_{d, g, r}$ possesses a \emph{non-reduced} component $\mathcal{H}$ such that
\begin{enumerate}[label=(\roman*), leftmargin=*, font=\rmfamily]
  \item $\dim \mathcal{H} = r^2 + 7e + 4$;

  \item at a general point $[X] \in \mathcal{H}$ we have $\dim T_{[X]} \mathcal{H} = \dim \mathcal{H} + 1$;

  \item a general point $[X] \in \mathcal{H}$ represents a curve $X$ lying on a cone $F$ over a smooth curve $Y$ of genus $\gamma$ and degree $e$ in $\mathbb{P}^{r-1}$ such that
  \begin{enumerate}[label=\theenumi.\arabic*.]
   \item $X \subset \mathbb{P}^{r}$ is projectively normal and passes through the vertex $P$ of the cone $F$;

   \item there is a line $l$ from the ruling of $F$ that is tangent to $X$ at $P$ as the intersection multiplicity is two;

   \item the projection from $P$ to the hyperplane in $\mathbb{P}^{r}$ containing the curve $Y$ induces a morphism $\varphi : X \to Y$ of degree three ;

   \item the ramification divisor $R_{\varphi}$ is linearly equivalent to the divisor cut on $X$ by a quadric hypersurface together with $Q_1 + Q_2$, where $Q_1$ and $Q_2$ are the remaining two points in which the tangent line $l$ intersects $X$ besides $P$.
  \end{enumerate}
\end{enumerate}
\end{theoremM}

The above result implies that the structure sheaf of the Hilbert scheme consists of commutative algebras with nonzero nilpotent elements. The first example of a nonreduced component of the Hilbert scheme of curves was produced by Mumford for $\mathcal{I}_{14,24,3}$ using space curves contained in cubic surfaces, see [Mum62]. It was generalized in Kleppe's systematic study [Kle87] of $s$-maximal families of curves in $\mathbb{P}^3$ for $s=3$, and subsequently by Kleppe and Ottem in [KO15] and [Kle2017] for $s = 4, 5$. Dan combined the analysis of the flag Hilbert schemes with the theory of Hodge loci to produce examples of nonreduced components with curves lying on surfaces in $\mathbb{P}^3$ of arbitrary degree, see [Dan17]. Interesting examples of nonreduced components of $\mathcal{I}_{d,g,3}$ have been given also in [GP82, Ell87, MP96, Nas06], see [Ser06, IV.6.2] for additional discussion. What all these examples have in common is their reliance on curves situated on surfaces in $\mathbb{P}^3$. Consequently, to construct nonreduced components of $\mathcal{I}_{d, g, r}$ for $r \geq 4$, an alternative approach is necessary. This task appears more challenging for $r \geq 4$, with the sole known example in this scenario being the one presented by Ciliberto, Lopez and Miranda in  [CLM96, Theorem 4.11].

Although our main result fits in the context of \cite{CIK21}, \cite{CIK22} and \cite{FS21}, it is independent of them. To obtain it, we develop the approach used in \cite{CIK22}, use the characterization of smooth curves on a cone that pass through its vertex given in \cite{CG99}, and apply similar arguments to those used in \cite{CLM96} and \cite{Cil87} to deduce that every deformation of a curve from the family of curves constructed in the theorem yields a curve from the same family. We remark that the technique used in the proof of our {\rm Main Theorem} cannot be by applied in the proof of \cite[Theorem B]{CIK22}, as we explain the reasons for this in {\rm Remark \ref{Sec3Remark2-3case}}. On the other hand, the possibility for curves on cones, which are algebraically equivalent to a  high degree hypersurface intersection plus a line, to yield a non-reduced component of the Hilbert scheme of curves has already been suggested in \cite[Remark 4.12]{CLM96}. In this sense our work was inspired by \cite{CLM96}.

The free resolution of the ideal of a smooth curve on a cone passing through its vertex, obtained by Catalisano and Gimigliano in \cite{CG99}, plays an essential role in the proof of our main result. For this reason we describe their result in section \ref{Sec2} using a setup that fits the framework of the Main Theorem. Further in the same section we prove several results about smooth curves on cones that are $m:1$ covers of the base of the cone and pass through its vertex. Also, for $m = 3$ we prove a technical result, namely {\rm Proposition \ref{Sec2PropPushForwardStrSheaf}}, that plays an important role in the proof of our Main Theorem, which is given in section \ref{Sec3}.

We work over the field $\mathbb{C}$. By \emph{curve} we understand a smooth integral projective algebraic curve. Given a line bundle $L$ on a smooth projective variety $X$, or a divisor $\Delta$ associated to $L$, we denote by $|L|$ or $|\Delta|$ the complete  linear series  $\mathbb P\left(H^0(X,L)\right)$ on $X$. For a line bundle $L$ and a divisor $\Delta$ on a variety $X$, we abbreviate, occasionally, the notation of the line bundle $L \otimes \mathcal{O}_X (\Delta)$ to simply $L(\Delta)$.  We use $\sim$ to denote linear equivalence of divisors. Given a finite morphism $\varphi : X \to Y$ of curves and a divisor $\Delta = \sum n_i P_i$ on $X$, we denote by $\varphi ( \Delta )$ the divisor $\sum n_i \varphi (P_i)$ on $Y$.
When $X$ is an object of a family, we denote by  $[X]$ the corresponding point of the Hilbert scheme representing the family. For all other definitions and properties of objects not explicitly introduced in the paper the reader can refer to \cite{Hart77} and \cite{ACGH}.

\section{ Preliminary results }\label{Sec2}

In our paper \cite{CIK22} we constructed a series of non-reduced components of the Hilbert scheme of curves using curves that lie on cones as each curve passes through the vertex of the corresponding cone. There, we considered only curves that are double covers of the base of the cone. On the other hand, curves on cones that are $m:1$ covers of the base, $m \geq 2$, and pass through the vertex have been studied by Catalisano and Gimigliano in \cite{CG99} with a different aim. Motivated by an earlier work of Jaffe about smooth curves on a cone that pass through its vertex, see \cite{Jaf91}, Catalisano and Gimigliano showed in \cite{CG99} that such curves are projectively normal, provided that the base curve of cone is, and gave a resolution of the ideal of such a curve in terms of a resolution of the ideal of the base curve. We will formulate below the main result of \cite{CG99}. For this assume that:

\begin{itemize}[font=\sffamily, leftmargin=1.0cm,
style=nextline]
 \item[$\Gamma$] is a smooth integral curve of genus $\gamma$,

 \item[$E$] is a divisor of degree $e \geq 2\gamma + 1$ on $\Gamma$,

 \item[$q$] is a point on $\Gamma$,

 \item[$S$] is the ruled surface $S = \mathbb{P} (\mathcal{O}_{\Gamma} \oplus \mathcal{O}_{\Gamma} (-E))$,

 \item[$f$] is the natural projection morphism $f : S \to \Gamma$,

 \item[$\Gamma_0$] is the section of minimal self-intersection of $f : S \to \Gamma$, that is, the one that corresponds to the exact sequence
 \[
  0 \to \mathcal{O}_{\Gamma} \to \mathcal{O}_{\Gamma} \oplus \mathcal{O}_{\Gamma} (-E) \to \mathcal{O}_{\Gamma} (-E) \to 0
 \]
 with $\Gamma_0^2 = \deg \mathcal{O}_{\Gamma} (-E) = -e$,

 \item[$\Psi$] is the morphism determined by the linear series $|\Gamma_0 + E\mathfrak{f}|$ on $S$.
\end{itemize}

\noindent We remark that $\Psi$ is isomorphism away from $\Gamma_0$ and contracts $\Gamma_0$ to a point, see \cite{FP05NM} for more details. Thus, $\Psi$ maps $S$ into a cone, so we denote by
\begin{itemize}[font=\sffamily, leftmargin=1.0cm,
style=nextline]
 \item[$F$] the image of $S$ under $\Psi$, that is, $F = \Psi (S)$, and

 \item[$P$] the vertex  of the cone $F$, that is, $P = \Psi (\Gamma_0)$.
\end{itemize}

\noindent Set $r := \dim |\Gamma_0 + E\mathfrak{f}|$. Then the embedding $F \subset \mathbb{P}^r$ is complete and the hyperplane sections of $F$ are the images, under $\Psi$, of the divisors from the linear series $|\Gamma_0 + E\mathfrak{f}|$ on $S$. Let
\begin{itemize}[font=\sffamily, leftmargin=1.0cm,
style=nextline]
 \item[$\sigma_D$] be a section of $f : S \to \Gamma$ for whose image $\sigma_D (\Gamma) =: D$ we have that

 \item[$D$] is a smooth curve in the linear series $|\Gamma_0 + E \mathfrak{f}|$ on $S$, and let

 \item[$Y$] be the image of $D$ under $\Psi$, that is, $Y = \Psi (D)$.
\end{itemize}
\noindent The curves $\Gamma$, $D$ and $Y$ are isomorphic to one another since $\Psi$ is an isomorphism away from $\Gamma_0$ and $D \cdot \Gamma_0 = (\Gamma_0 + E \mathfrak{f}) \cdot \Gamma_0 = 0$. Also, by \cite[Proposition 1]{CIK21}, $r = e-\gamma+1$, and $Y$ is a smooth, linearly normal curve of genus $\gamma$ and degree $e$ in $\mathbb{P}^{r-1}$. In fact, due to $e \geq 2\gamma + 1$, it follows by \cite{Mum1969} that $Y$ is projectively normal. Thus, we can consider $F$ as a cone in $\mathbb{P}^r$ over the projectively normal curve $Y \subset \mathbb{P}^{r-1}$.

\begin{enumerate}[label=\normalfont(\alph*), leftmargin=*, font=\rmfamily]
 \item[(MS)] \emph{We call the above assortment of assumptions about $\Gamma$, $E$, $q$, $S$, $f$, $\Gamma_0$, $\Psi$, $F$, $P$, $r$, $D$, $\sigma_D$ and $Y$, and the properties we described, the \emph{Main Setup}, and we abbreviate it as {\rm (MS)}}.
\end{enumerate}

Catalisano-Gimigliano's result can now be formulated as follows.

\begin{prop}(\cite[Proposition 2]{CG99})\label{Sec2PropCatGim}
Assume the conditions and notations of {\rm (MS)}. Let  $C_m \in |m\Gamma_0 + (mE + q) \mathfrak{f}|$ be general and $X_m = \Psi (C_m)$ be the image of $C_m$ on $F$, where $m \geq 2$ is an integer. Then
\begin{enumerate}[label=\normalfont(\alph*), leftmargin=*, font=\rmfamily]
 \item $X_m$ is a smooth integral projectively normal curve that passes through the vertex $P$;

 \item given a free resolution of the ideal sheaf $\mathcal{I}_{Y}$ of $Y$
\begin{equation}\label{Sec2ResolY}
 0 \to \mathcal{F}_{r-2} \to \mathcal{F}_{r-3} \to \cdots \to \mathcal{F}_{1} \to \mathcal{I}_Y \to 0 \,
\end{equation}
with $\mathcal{F}_{i} = \bigoplus\limits^{\beta_i}_{j=1} \mathcal{O}_{\mathbb{P}^r} (-\beta_{i,j})$, $i = 1, \ldots , r-2$, the ideal sheaf $\mathcal{I}_{X_m}$  of $X_m$ has a free resolution
\begin{equation}\label{Sec2ResolXm}
 0 \to \mathcal{P}_{r-1} \to \mathcal{P}_{r-2} \to \cdots \to \mathcal{P}_{1} \to \mathcal{I}_{X_m} \to 0 \, ,
\end{equation}
where
\begin{itemize}[leftmargin=*, font=\rmfamily]
 \item[] $\mathcal{P}_{1} = \bigoplus\limits^{r-1}_{1} \mathcal{O}_{\mathbb{P}^r} (-m-1) \oplus \bigoplus\limits^{\beta_1}_{j=1} \mathcal{O}_{\mathbb{P}^{r}} (-\beta_{1,j})$

 \item[] $\mathcal{P}_k =  \bigoplus\limits^{\binom{r-1}{k}}_1 \mathcal{O}_{\mathbb{P}^{r}}(-m-k) \oplus \bigoplus\limits^{\beta_k}_{j=1} \mathcal{O}_{\mathbb{P}^{r}} (-\beta_{k,j}) \oplus \bigoplus\limits^{\beta_{k-1}}_1 \mathcal{O}_{\mathbb{P}^{r}}(-m-\beta_{k-1,j})$, for $2 \leq k \leq r-2$

 \item[] $\mathcal{P}_{r-1} = \mathcal{O}_{\mathbb{P}^{r}}(-m-r+1) \oplus \bigoplus\limits^{\beta_{r-2}}_1 \mathcal{O}_{\mathbb{P}^{r}}(-m-\beta_{r-2,j})$.
\end{itemize}
\end{enumerate}
\end{prop}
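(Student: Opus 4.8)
The plan is to carry out everything on the smooth ruled surface $S$, where $C_m$ is an honest divisor, and to transport the conclusions to the cone $F\subset\mathbb P^r$ through the contraction $\Psi$ and the projection $f:S\to\Gamma$. Writing $H:=\Gamma_0+E\mathfrak f$ for the hyperplane class, one has $C_m\in|m\Gamma_0+(mE+q)\mathfrak f|=|mH+q\mathfrak f|$, so that on $F$ the image $X_m=\Psi(C_m)$ satisfies $X_m\sim mH_F+l$, where $H_F$ is the hyperplane section of $F$ and $l:=\Psi(q\mathfrak f)$ is the ruling through the vertex $P$ coming from the fiber over $q$. The two intersection numbers $C_m\cdot\Gamma_0=1$ and $C_m\cdot\mathfrak f=m$ govern the whole argument: the first says that $C_m$ meets the contracted section transversally in a single point, the second that $f|_{C_m}$ realizes $C_m$ as an $m:1$ cover of $\Gamma$.

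For (a) I would first apply Bertini on $S$ to a general $C_m$: after checking that the only base points of $|mH+q\mathfrak f|$ lie on the fiber $q\mathfrak f$ and analyzing them directly, a general member is smooth and integral. Since $\Psi$ is an isomorphism off $\Gamma_0$ and $C_m\cdot\Gamma_0=1$, the restriction $\Psi|_{C_m}$ is injective and unramified, hence an isomorphism onto $X_m$; in particular $X_m$ is smooth even at $P=\Psi(\Gamma_0)$, through which it passes exactly once. Projective normality is the cohomological core: pushing forward along $f$ one has $f_*\mathcal O_S(jH)=\bigoplus_{i=0}^{j}\mathcal O_\Gamma((j-i)E)$, so the surjectivity of $H^0(\mathbb P^r,\mathcal O(k))\to H^0(X_m,\mathcal O_{X_m}(k))$, equivalently $H^1(\mathbb P^r,\mathcal I_{X_m}(k))=0$ for all $k$, reduces to the non-speciality of the divisors $(k-i)E$ and $(k-i)E-q$ on $\Gamma$ together with the surjectivity of the corresponding multiplication maps of sections; all of this follows from $e=\deg E\ge 2\gamma+1$, which also makes the cone $F$ arithmetically Cohen--Macaulay because its base $Y$ is projectively normal (already recorded in {\rm (MS)}).

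For (b) the construction proceeds through two free resolutions glued together. In $R=\mathbb C[x_0,\dots,x_r]$ one has $I_F=I_Y\cdot R$, so the resolution (\ref{Sec2ResolY}) serves as a minimal resolution $\mathbb F_\bullet$ of $I_F$; and the ruling $l$ is a line in $\mathbb P^r$, hence the complete intersection of $r-1$ hyperplanes $\ell_1,\dots,\ell_{r-1}$, whose ideal $I_l$ is resolved by the Koszul complex $\mathbb K_\bullet$ with $\mathbb K_k=\bigoplus^{\binom{r-1}{k}}\mathcal O(-k)$. The key structural input is the isomorphism of graded modules
\[
 \mathcal I_{X_m/F}\;\cong\;\mathcal I_{l/F}(-m),
\]
which holds because $X_m\sim mH_F+l$, because $\mathcal O_F(mH_F)=\mathcal O_F(m)$ is invertible, and because the ideal sheaf of the prime divisor $l$ is reflexive and equal to $\mathcal O_F(-l)$. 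Using the arithmetic Cohen--Macaulayness of $F$ (so that $H^1(\mathbb P^r,\mathcal I_F(k))=0$) one identifies $I_{X_m}/I_F$ with $\bigoplus_k H^0(F,\mathcal I_{X_m/F}(k))$ and $I_l/I_F$ with $\bigoplus_k H^0(F,\mathcal I_{l/F}(k))$. I then resolve $\mathcal I_{l/F}$ by the mapping cone of a comparison map $\mathbb F_\bullet\to\mathbb K_\bullet$ lifting $I_F\hookrightarrow I_l$ in $0\to I_F\to I_l\to I_{l/F}\to 0$, whose $k$-th term is $\mathbb K_k\oplus\mathcal F_{k-1}$; twisting by $-m$ gives a resolution of $\mathcal I_{X_m/F}$ with $k$-th term $\mathbb K_k(-m)\oplus\mathcal F_{k-1}(-m)$. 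Finally I resolve $\mathcal I_{X_m}$, the middle term of $0\to I_F\to I_{X_m}\to I_{X_m/F}\to 0$, by the horseshoe lemma, obtaining the $k$-th free term
\[
 \mathcal F_k\oplus\mathcal F_{k-1}(-m)\oplus\mathbb K_k(-m)=\bigoplus_{j}\mathcal O(-\beta_{k,j})\oplus\bigoplus_{j}\mathcal O(-m-\beta_{k-1,j})\oplus\bigoplus^{\binom{r-1}{k}}\mathcal O(-m-k),
\]
which is exactly $\mathcal P_k$ of (\ref{Sec2ResolXm}).

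It remains to check that these cones are minimal and, above all, to establish the isomorphism $\mathcal I_{X_m/F}\cong\mathcal I_{l/F}(-m)$ rigorously; the latter is where I expect the real difficulty. Minimality is a matter of degree bookkeeping: since $Y$ is projectively normal and nonspecial of degree $e\ge 2\gamma+1$, it is $3$-regular, so $k+1\le\beta_{k,j}\le k+2$; consequently every entry of the comparison and connecting maps — multiplication by a degree-$m$ form, a lift of $I_F\hookrightarrow I_l$, or a map out of $\mathbb K_k(-m)$ into $\mathcal F_{k-1}$ — has strictly positive degree as soon as $m\ge 2$, so no unit appears and both resolutions are minimal. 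The delicate point is the module isomorphism: $X_m$ and $l$ are non-Cartier Weil divisors through the singular vertex $P$, whereas $mH_F$ is Cartier, and one must verify that tensoring $0\to\mathcal I_{l/F}\to\mathcal O_F\to\mathcal O_l\to 0$ by the invertible sheaf $\mathcal O_F(-m)$ genuinely yields the ideal sheaf $\mathcal I_{X_m/F}$, rather than merely a reflexive sheaf agreeing with it away from $P$. I would settle this on the smooth surface $S$, where $C_m$, $q\mathfrak f$ and the pullbacks $\Psi^*\ell_i$ are honest divisors and the equivalence $C_m\sim mH+q\mathfrak f$ is transparent, and then descend via $\Psi$ using the projective normality proved in (a).
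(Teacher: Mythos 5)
The paper offers no proof of this proposition at all --- it is imported verbatim from \cite[Proposition 2]{CG99} --- and your proposal reconstructs essentially the argument of that source: the identification $\mathcal{I}_{X_m/F}\cong\mathcal{I}_{l/F}(-m)$, the mapping cone of the Koszul complex of the line against the resolution of $\mathcal{I}_F=\mathcal{I}_Y$, the horseshoe lemma for $0\to I_F\to I_{X_m}\to I_{X_m}/I_F\to 0$, and the reduction of smoothness and projective normality to the geometry of $S$, which is exactly the route of Jaffe and Catalisano--Gimigliano. The one ``delicate point'' you single out is in fact standard and needs no descent to $S$: $X_m$ and $l$ are prime divisors on the normal surface $F$, so their ideal sheaves are the divisorial (hence reflexive) sheaves $\mathcal{O}_F(-X_m)$ and $\mathcal{O}_F(-l)$, and since $\Psi_*(C_m)\sim\Psi_*\bigl(m(\Gamma_0+E\mathfrak{f})+q\mathfrak{f}\bigr)=mH_F+l$ in $\operatorname{Cl}(F)\cong\operatorname{Cl}(S)/\mathbb{Z}[\Gamma_0]$, twisting by the Cartier class $\mathcal{O}_F(m)$ gives the isomorphism directly (also note that the proposition asserts only the existence of the resolution, not its minimality, so your bookkeeping with $k+1\le\beta_{k,j}\le k+2$ is superfluous).
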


\begin{remark}\label{Sec2RemarkCatGim}
For any point $z \in \Gamma$ the morphism $\Psi$ maps the fiber $z\mathfrak{f}$ to a line from the ruling of $F$ passing through the point $\Psi (\sigma_D (z))$ on $Y$. Let $l_q \subset F$ be the line corresponding to $q$.
As it is pointed out in \cite[section 1]{CG99}, the curve $X_m$, together with $(e-1)$ lines $L_1, \ldots , L_{e-1}$ from the ruling of $F$, is cut on $F$ by a degree $(m+1)$ hypersurface $G_{m+1} \subset \mathbb{P}^r$, where $L_1, \ldots , L_{e-1}$ are the residual lines on $F$ cut by a hyperplane that contains the line $l_q$. We remark also that the smoothness of a general $C_m \in |m\Gamma_0 + (mE+q)\mathfrak{f}|$ follows by \cite{Jaf91} and \cite{CG99}.
\end{remark}

Note that since the curve $C_m$ is in linear equivalence class of $m\Gamma_0 + (mE + q)\mathfrak{f}$, the adjunction formula gives about its genus $g$
\[
 \begin{aligned}
  2g-2 & = (-2\Gamma_0 + (K_{\Gamma} - E)\mathfrak{f} + m\Gamma_0 + (mE + q)\mathfrak{f}) \cdot (m\Gamma_0 + (mE + q)\mathfrak{f}) \\
  & = m(m-1)e + 2m\gamma - 2 \, ,
 \end{aligned}
\]
hence $g = \binom{m}{2}e+m\gamma$. Likewise, $(\Gamma_0 + E\mathfrak{f}) \cdot C_m = me + 1$, so $X_m$ is a smooth curve of degree $d = me + 1$ and same genus $g$. We remark also that if $q_0$ is the point in which the fiber $q \mathfrak{f}$ meets $\Gamma_0$, then it follows by
\cite[Proposition 36]{FP05NM} that the linear series $|m\Gamma_0 + (mE + q)\mathfrak{f}|$ has a unique base point at $q_0$. This allows us to make the following observation about $X_m$.

\begin{prop}\label{SecPropLine}
Assume the conditions and notations of {\rm (MS)}. Let $l_q$ be as in {\rm Remark \ref{Sec2RemarkCatGim}} and  $X_m$ be as above. The line $l_q$ is tangent to $X_m$ at the point $P$ as their intersection multiplicity at $P$ is exactly two.
\end{prop}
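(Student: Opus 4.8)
The plan is to lift the question to the ruled surface $S$, where intersection numbers are transparent, and then to analyze the contraction $\Psi$ in local analytic coordinates near $q_0$; the tangency asserted at the singular vertex $P$ is produced precisely by the collapse of $\Gamma_0$, and capturing the resulting \emph{increase} of the contact order by one is the heart of the matter. First I would record what is immediate. Since $q_0 = \Gamma_0 \cap q\mathfrak{f}$ is the unique base point of $|m\Gamma_0 + (mE+q)\mathfrak{f}|$, a general $C_m$ passes through $q_0$; as $\Psi(q_0) = P$ and $l_q = \Psi(q\mathfrak{f})$, both $X_m$ and $l_q$ pass through $P$. Moreover $C_m \cdot \Gamma_0 = (m\Gamma_0 + (mE+q)\mathfrak{f})\cdot \Gamma_0 = -me + (me+1) = 1$, so $C_m$ meets $\Gamma_0$ in the single reduced point $q_0$, hence transversally; in particular $q_0$ is a smooth point of $C_m$ and a local coordinate cutting out $\Gamma_0$ restricts to a local coordinate on $C_m$ at $q_0$.

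The key step is to prove that a general $C_m$ also meets the fiber $q\mathfrak{f}$ \emph{transversally} at $q_0$, i.e. that the intersection multiplicity $i(C_m, q\mathfrak{f}; q_0) = 1$. I would do this by restricting the linear system to the fiber: writing $A := \Gamma_0 + E\mathfrak{f}$, the sequence
\[
 0 \to \mathcal{O}_S(mA) \to \mathcal{O}_S(mA + q\mathfrak{f}) \to \mathcal{O}_{q\mathfrak{f}}(m) \to 0
\]
(the quotient has degree $(mA+q\mathfrak{f})\cdot\mathfrak{f} = m$) exhibits the image $W \subseteq H^0(q\mathfrak{f}, \mathcal{O}_{\mathbb{P}^1}(m))$ as a system with $\dim W = h^0(mA + q\mathfrak{f}) - h^0(mA)$. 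Using $\mathcal{O}_S(\Gamma_0) = \mathcal{O}_S(1)$ one computes $f_*\mathcal{O}_S(m\Gamma_0) = \operatorname{Sym}^m(\mathcal{O}_\Gamma \oplus \mathcal{O}_\Gamma(-E)) = \bigoplus_{i=0}^{m}\mathcal{O}_\Gamma(-iE)$, so that $h^0(mA+q\mathfrak{f}) = \sum_{j=0}^{m} h^0(\Gamma, \mathcal{O}_\Gamma(jE + q))$ and $h^0(mA) = \sum_{j=0}^{m} h^0(\Gamma, \mathcal{O}_\Gamma(jE))$. Since $jE$ and $jE + q$ are nonspecial for $1 \le j \le m$ (as $je \ge e \ge 2\gamma+1$), each corresponding pair of terms differs by exactly one, whence $\dim W = m$. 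Therefore the base divisor of $W$ on $q\mathfrak{f} \cong \mathbb{P}^1$ has degree at most $m + 1 - \dim W = 1$; as it contains $q_0$, it equals the reduced point $q_0$. Thus a general section of $H^0(mA+q\mathfrak{f})$ restricts to a section of $\mathcal{O}_{\mathbb{P}^1}(m)$ vanishing at $q_0$ to order exactly one, which is the desired transversality.

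Finally I would pass to $F$ through an explicit local model of $\Psi$. Near $q_0$ choose coordinates $(w,t)$ on $S$ with $\Gamma_0 = \{t=0\}$ and $q\mathfrak{f} = \{w=0\}$, together with a local holomorphic lift $w \mapsto y(w)$ of $Y \subset \mathbb{P}^{r-1}$ with $[y(0)] = \Psi(\sigma_D(q))$, so that the contraction of the $(-e)$-curve $\Gamma_0$ to the vertex is given, up to a unit, by $\Psi(w,t) = t\,y(w)$ with $P$ the origin. By transversality to $\Gamma_0$ I may parametrize $C_m$ by $t$ as $t \mapsto (w(t), t)$ with $w(0)=0$, and transversality to $q\mathfrak{f}$ gives $w'(0) \ne 0$. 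Then
\[
 X_m(t) = \Psi(w(t), t) = t\,y(0) + w'(0)\,t^2\,y'(0) + O(t^3).
\]
Hence the tangent line to $X_m$ at $P$ is the line through $P$ in the direction $y(0)$, which is exactly $l_q$; and since $y(0)$ and $y'(0)$ are linearly independent (as $Y$ is smooth at $\Psi(\sigma_D(q))$) and $w'(0) \ne 0$, the deviation of $X_m$ from $l_q$ vanishes to order precisely two, giving $i(X_m, l_q; P) = 2$.

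The step I expect to be the main obstacle is this last passage: computing an intersection multiplicity through the singular vertex $P$ of the cone. The subtle point is that a merely \emph{transverse} meeting of $C_m$ and $q\mathfrak{f}$ upstairs becomes a genuine \emph{tangency} of $X_m$ and $l_q$ downstairs, the contact order being raised by exactly one when $\Gamma_0$ is contracted. Making this rigorous requires the explicit local form of the blow-down of the $(-e)$-curve together with the smoothness of $Y$, and it is here that the two transversality statements on $S$ are used in an essential way.
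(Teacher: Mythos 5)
Your proof is correct, but it takes a genuinely different route from the paper's. The paper obtains the lower bound $i(X_m, l_q; P) \geq 2$ from the same observation you start from --- the proper transforms $C_m$ and $q\mathfrak{f}$ meet on the exceptional curve $\Gamma_0$ --- but for the upper bound it argues globally: a general $C_m$ meets $q\mathfrak{f}$ in $m-1$ further distinct points off $\Gamma_0$, these map to $m-1$ distinct points of $X_m \cap l_q$ away from $P$, and since $\varphi : X_m \to Y$ has degree $m$, the degree-$m$ divisor $\varphi^{\ast}Q$ leaves room for $P$ to appear only with multiplicity one, forcing the contact of $X_m$ with $l_q$ at $P$ to be exactly two. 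You instead prove a sharper statement upstairs --- transversality of a general $C_m$ with the fiber at $q_0$, via the computation $\dim W = m$ (using $f_{\ast}\mathcal{O}_S(m\Gamma_0) \cong \bigoplus_{j=0}^{m}\mathcal{O}_{\Gamma}(-jE)$ and nonspeciality for $j \geq 1$; note the $j=0$ pair contributes $0$ precisely because $\gamma \geq 1$, which is also the setting in which $q_0$ is a base point at all) together with the bound of degree one on the base divisor of $W$ --- and then read off the contact order directly from the local model $\Psi(w,t) = t\,y(w)$ of the blow-down, where the expansion $t\,y(0) + w'(0)\,t^2\,y'(0) + O(t^3)$ and the independence of $y(0), y'(0)$ give multiplicity exactly two; all of these steps are sound. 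What each approach buys: the paper's argument is shorter and purely enumerative, but it quietly relies on the residual $m-1$ points being distinct for general $C_m$, a fact your $\dim W = m$ computation actually substantiates (the moving part of $W$ has degree $m-1$ and projective dimension $m-1$, hence is the complete system $|\mathcal{O}_{\mathbb{P}^1}(m-1)|$, whose general member is reduced); your argument is self-contained, avoids invoking the global $m:1$ structure for the upper bound, re-derives in sharper form the base-point statement the paper quotes from \cite{FP05NM}, and isolates the precise local mechanism --- the contact order rises by exactly one under the contraction of $\Gamma_0$ whenever $C_m$ is transverse to both $\Gamma_0$ and the fiber at $q_0$.
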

\begin{proof}
The morphism $\Psi : S \to F$ is in fact the  blow-up of $F$ at the vertex $P$. Since $C_m$ is the proper transform of $X_m$ and $q\mathfrak{f}$ is the proper transform of $l_q$, they wouldn't meet on $\Gamma_0 = \Psi^{-1} (P)$ unless the intersection of $X_m$ and $l_q$ at $P$ is of multiplicity at least two. On the other hand $C_m \in |m\Gamma_0 + (mE + q)\mathfrak{f}|$ is general, hence $q\mathfrak{f}$ meets $C_m$ in additional $m-1$ points, all of which are distinct and away from $\Gamma_0$. Since $\Psi$ is an isomorphism away from $\Gamma_0$, the images of those $m-1$ points will be distinct points on $l_q$ and away from $P$. The inner projection with center $P$ to the hyperplane containing $Y$ yields an $m:1$ covering $X_m \to Y$, therefore the intersection multiplicity of $X_m$ and $l_q$ at $P$ can only be two.
\end{proof}

It is convenient to have an explicit notation for the morphism mentioned in the proof of the lemma, so denote by $\varphi : X_m \to Y$ the $m:1$ covering morphism induced by the inner projection with center $P$ to the hyperplane containing the curve $Y$. We remark that the image $\varphi (P)$ of the point $P$ is by definition the point in which the tangent line $l_q$ to $X_m$ at $P$ meets the hyperplane, which is the point $\Psi (\sigma_D (q)) =: Q$. Consider also the morphism $\phi : C_m \to D$ defined as the composition $\phi := \sigma_D \circ (f_{|_{C_m}})$. Note that the morphism $\phi$ coincides with the composition $(\Psi^{-1})_{|_Y} \circ \varphi \circ (\Psi_{|_{C_m}})$. Next we will derive a few facts involving the ramification divisor of $\varphi$ but before that we summarize, for convenience of the reader, the \emph{additional notations}. We will refer to them as  {\rm (AN)}.

\begin{itemize}[font=\sffamily, leftmargin=1.0cm, style=nextline]
 \item[(AN)]

\begin{itemize}[font=\sffamily, leftmargin=1.0cm, style=nextline]
 \item[$C_m$] is a general curve in the linear series $|m\Gamma_0 + (mE + q)\mathfrak{f}|$,

 \item[$q_0$\phantom{.}] is the unique base point of $|m\Gamma_0 + (mE + q)\mathfrak{f}|$; note that $q_0 \in \Gamma_0$,

 \item[$X_m$] is the image $\Psi (C_m) \subset F \subset \mathbb{P}^r$ of $C_m$, which is smooth according to {\rm Remark \ref{Sec2RemarkCatGim}},

 \item[$\varphi$\phantom{.}] is the $m:1$ covering morphism $\varphi : X_m \to Y$ induced by the projection with center $P$ to the hyperplane in $\mathbb{P}^r$ containing $Y$,

 \item[$\phi$\phantom{.}] is the $m:1$ covering morphism $\phi : C_m \to D$ defined as $\phi := \sigma_D \circ (f_{|_{C_m}})$,

 \item[$Q$\phantom{.}] is the point on $Y$ defined as the image $\Psi (\sigma_D (q))$ of $q \in \Gamma$.
\end{itemize}
\end{itemize}

\begin{prop}\label{Sec2PropRamBranch}
Assume the conditions and notations {\rm (MS)} and {\rm (AN)}. Denote by $R_{\varphi}$ the ramification divisor of the morphism $\varphi$. Then
\begin{enumerate}[label=\normalfont(\alph*), leftmargin=*, font=\rmfamily]
 \item $R_{\varphi}$ is linearly equivalent to the divisor cut on $X_m$ by a hypersurface of degree $(m-1)$ together with the $(m-1)$ points, besides $P$, in which the line $l_q$ meets $X_m$  ;

 \item $\deg R_{\varphi} = (m-1)(me + 2)$  ;

  \item for the branch divisor $\varphi (R_{\varphi})$ of $\varphi$ we have
$
  \mathcal{O}_Y (\varphi (R_{\varphi})) \cong \mathcal{O}_Y (m(m-1)) \otimes \mathcal{O}_Y (2(m-1)Q)
$;

 \item $\varphi^{\ast} \mathcal{O}_Y (1) \cong \mathcal{O}_{X_m} (1) \otimes \mathcal{O}_{X_m} (-P) $  .
\end{enumerate}
\end{prop}
\begin{proof}
Since $C_m$ and $X_m$ are isomorphic, we can transform some of the claims about $R_{\varphi}$ into claims about the ramification divisor $R_{\phi}$ of the morphism $\phi$, which are easier to prove.
\begin{enumerate}[label=(\alph*), leftmargin=*, font=\rmfamily]
 \item For $R_{\phi}$ we have
 \[
  \begin{aligned}
   R_{\phi} & \sim K_{C_m} - \phi^{\ast} K_D \\
   & \sim (-2\Gamma_0 + (K_{\Gamma}-E)\mathfrak{f} + m\Gamma_0 + (mE + q)\mathfrak{f})_{|_{C_m}} - \phi^{\ast} K_D \\
   & \sim ((m-2)\Gamma_0 + ((m-1) E + q)\mathfrak{f})_{|_{C_m}} + K_{\Gamma} \mathfrak{f}_{|_{C_m}} - \phi^{\ast} K_D \, .
  \end{aligned}
 \]
The divisor $K_D$ is the restriction of $K_S + D \sim -2\Gamma_0 + (K_{\Gamma}-E)\mathfrak{f} + \Gamma_0 + E \mathfrak{f} \sim -\Gamma_0 + K_{\Gamma}\mathfrak{f}$ on the curve $D$. However, $D$ doesn't meet $\Gamma_0$, therefore $\phi^{\ast} K_D \sim \phi^{\ast} ((-\Gamma_0 + K_{\Gamma}\mathfrak{f})_{|_D}) = \phi^{\ast} (K_{\Gamma}\mathfrak{f}_{|_D}) \sim K_{\Gamma} \mathfrak{f}_{|_{C_m}}$. Therefore
\[
 R_{\phi} \sim ((m-2)\Gamma_0 + ((m-1) E + q)\mathfrak{f})_{|_{C_m}} \, .
\]
By the commutativity of the diagram
\begin{equation}\label{Sec2PropCommutDiag}
    \begin{tikzcd}
      C_m \ar[r, "{\Psi_{|_{C_m}}}", "{\cong}"'] \ar[d, "{\phi}"'] & X_m \ar[d, "{\varphi}"] \\
    D \ar[r, "{\Psi_{|_{D}}}", "{\cong}"']& Y
    \end{tikzcd}
\end{equation}
we have that the restriction of $\Psi$ on $C_m$ takes a divisor that is linearly equivalent to the ramification divisor of $\phi$ into a divisor that is linearly equivalent to the ramification divisor of $\varphi$. Consider
\begin{equation}\label{Sec2Prop4Rphi}
 R_{\phi} \sim ((m-2)\Gamma_0 + ((m-1) E + q)\mathfrak{f})_{|_{C_m}} \sim ((m-1)(\Gamma_0 + E \mathfrak{f}) + (q \mathfrak{f} - \Gamma_0))_{|_{C_m}} \, .
\end{equation}
Since $\Gamma_0$ and $C_m$ meet exactly at the point $q_0$ in which the fiber $q\mathfrak{f}$ meets $\Gamma_0$, it follows that $(q \mathfrak{f} - \Gamma_0))_{|_{C_m}}$ is an effective divisor on $C_m$ that consists of $m-1$ points, say $q_1, \ldots , q_{m-1}$ on $C_m$, in which $q\mathfrak{f}$ intersects $C_m$ besides $q_0$. Hence,
\[
R_{\phi} \sim ((m-1)(\Gamma_0 + E \mathfrak{f}))_{|_{C_m}} + q_1 + \cdots + q_{m-1} \, .
\]
The morphism $\Psi : S \to \mathbb{P}^r$ is defined by the linear series $|\Gamma_0 + E \mathfrak{f}|$ on $S$, so $\Psi$ maps the restriction $((m-1)(\Gamma_0 + E \mathfrak{f}))_{|_{C_m}}$ to the divisor on $X_m$ cut by a hypersurface of degree $m-1$. Also, $\Psi$ maps the fiber $q \mathfrak{f}$ into the line $l_q$. The images of the points $q_1, \ldots , q_{m-1}$ under $\Psi$ will be the $m-1$ points in which $l_q$ meets $X_m$ besides $P$. Therefore, $R_{\varphi}$ is linearly equivalent to the divisor cut on $X_m$ by a hypersurface of degree $(m-1)$ together with the images of the points $q_1, \ldots , q_{m-1}$, which lie on $l_q$ as claimed.

 \item Since $\deg X_m = (\Gamma_0 + E\mathfrak{f}) \cdot (m\Gamma_0 + (mE+q)\mathfrak{f}) = me+1$, it follows by {\rm (a)} that
 \[
   \begin{aligned}
   \deg R_{\varphi} & = (m-1) \deg X_m + (m-1)
   = (m-1)(me + 2) \, .
  \end{aligned}
 \]

 \item To verify the last claim we show for the branch divisor $\phi (R_{\phi})$ of $\phi : C_m \to D$ that
 \[
  \phi (R_{\phi}) \sim ((m(m-1)E + 2(m-1)q)\mathfrak{f})_{|_{D}} \, .
 \]

\noindent Recall first that the map $\phi : \Div (C_m) \to \Div (D)$ is linear in the sense that $\phi(\sum\limits_{j} n_{P_j} P_j) = \sum\limits_{j} n_{P_j} \phi(P_j)$, where $P_j \in C_m$ and $n_j \in \mathbb{Z}$. Note also that according to \cite[Ex. IV.2.6, p. 306]{Hart77}, the image of any divisor linearly equivalent to $\sum\limits_{j} n_{P_j} P_j$ determines the  linear equivalence class of $\phi(\sum\limits_{j} n_{P_j} P_j)$. Thus, as we claim just linear equivalence, the first equivalence in equation {\rm(\ref{Sec2Prop4Rphi})} implies that it is sufficient to verify that
 \begin{enumerate}[label=(\arabic*), leftmargin=*, font=\rmfamily]
  \item $\phi \left( (m-2)(\Gamma_0 + E\mathfrak{f})_{|_{C_m}} \right) \sim ((m(m-2)E + (m-2)q)\mathfrak{f})_{|_{D}}$, and
  \item $\phi \left( (E+q)\mathfrak{f} )_{|_{C_m}} \right) \sim (( m(E+q))\mathfrak{f})_{|_{D}}$.
 \end{enumerate}
The first claim follows from the fact that $\Gamma_0$ and $C_m$ intersect exactly at $q_0$, $\phi (q_0) = q\mathfrak{f}_{|_D}$ and that $\phi : C_m \to D$ is an $m:1$ covering. The second claim follows by similar reasons. This implies about the branch divisor on $D$ that
\[
 \begin{aligned}
  \phi (R_{\phi}) & \sim \phi ((m-2)(\Gamma_0 + E\mathfrak{f})_{|_{C_m}}) + \phi \left( ((E+q)\mathfrak{f} )_{|_{C_m}} \right) \\
  & \sim ((m(m-2)E + (m-2)q)\mathfrak{f})_{|_{D}} + (( m(E+q))\mathfrak{f})_{|_{D}} \\
  & \sim ((m(m-1)E + 2(m-1)q)\mathfrak{f})_{|_{D}} \, .
 \end{aligned}
\]
By the commutativity of diagram {\rm (\ref{Sec2PropCommutDiag})} we have that
\[
\varphi (R_{\varphi}) \sim \Psi_{|_D} (\phi (R_{\phi})) \sim
\Psi   ((m(m-1)E + 2(m-1)q)\mathfrak{f})_{|_{D}} )  \, .
\]
Recall that $\Gamma_0$ and $D$ do not intersect and
$E\mathfrak{f}_{|_{D}} \sim (\Gamma_0 + E\mathfrak{f})_{|_{D}}$.  Since the divisors from $|\Gamma_0 + E\mathfrak{f}|$ are mapped by $\Psi$ into hyperplane sections of $F$, it follows that the branch divisor $\varphi (R_{\varphi})$ is linearly equivalent to a divisor on $Y$ cut by a hypersurface of degree $m(m-1)$ together with the with the divisor $2(m-1)Q$, where $Q$ is the point in which the line $l_q$ meets $Y$. Therefore, $
  \mathcal{O}_Y (\varphi (R_{\varphi})) \cong \mathcal{O}_Y (m(m-1)) \otimes \mathcal{O}_Y (2(m-1)Q)
$ as it was claimed.

 \item The claim and its proof are contained in the proof of \cite[Proposition 2]{CIK22}.
\end{enumerate}
\end{proof}

The proposition that follows will be used in the proof of the {\rm Main Theorem} to identify the curves of given degree and genus that lie a cone in terms of the linear equivalence class of a specific divisor on the desingularization of the cone.

\begin{prop}\label{Sec2PropIdentifyCurves}
Suppose that $F \subset \mathbb{P}^r$ is a cone over a smooth integral linearly normal curve $Y$ of genus $\gamma$ and degree $e \geq 2\gamma + 1$ in $\mathbb{P}^{r-1}$. Let $S$ be the ruled surface defined as the blow-up of $F$ at its vertex, and let $f : S \to Y$ be the natural surjective morphism with a section $Y_0$ of minimal self-intersection. If $X$ is a smooth integral curve of degree $d = me + 1$ and genus $g = \binom{m}{2}e + m\gamma$ on $F$, then its proper transform $C$ on $S$ is linearly equivalent to $m Y_0 + (mE+q)\mathfrak{f}$, where $E$ is an effective divisor of degree $e$ on $Y$ such that $S \cong \mathbb{P} (\mathcal{O}_Y \oplus \mathcal{O}_Y (-E))$ and $q$ is a point on $Y$.
\end{prop}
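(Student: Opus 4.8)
The plan is to determine the class of $C$ in $\operatorname{Pic}(S) = \mathbb{Z}\,Y_0 \oplus f^{\ast}\operatorname{Pic}(Y)$ by combining intersection‑theoretic bookkeeping on the ruled surface with the numerical hypotheses on $d$ and $g$. First I would fix the setup: since $F$ is the cone over the linearly normal curve $Y \subset \mathbb{P}^{r-1}$, its blow‑up $S$ at the vertex is, as in {\rm (MS)} read in reverse, the ruled surface $\mathbb{P}(\mathcal{O}_Y \oplus \mathcal{O}_Y(-E))$ for $E$ a hyperplane divisor of $Y$ (effective of degree $e$); here $Y_0$ is the minimal section, $f|_{Y_0} : Y_0 \to Y$ is an isomorphism, and one has $Y_0^2 = -e$, $Y_0 \cdot \mathfrak{f} = 1$, $\mathfrak{f}^2 = 0$, $E\mathfrak{f} \cdot Y_0 = e$, together with $K_S \sim -2Y_0 + (K_Y - E)\mathfrak{f}$ and $\mathcal{N}_{Y_0/S} \cong \mathcal{O}_Y(-E)$ via $f|_{Y_0}$, while the pullback of the hyperplane bundle of $F$ is $\mathcal{O}_S(Y_0 + E\mathfrak{f})$. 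Because $C$ is the proper transform of the integral curve $X$, it does not contain the only curve contracted by the blow‑down $\pi : S \to F$, namely $Y_0$; hence $\pi|_C : C \to X$ is a finite birational morphism onto the smooth, and therefore normal, curve $X$, so it is an isomorphism and $C$ is smooth of genus $g$.

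Write $C \sim a\,Y_0 + \mathfrak{b}\mathfrak{f}$ with $a \in \mathbb{Z}$ and $\mathfrak{b}$ a divisor class on $Y$. Intersecting with the hyperplane class gives $\deg X = C \cdot (Y_0 + E\mathfrak{f}) = \deg \mathfrak{b}$, so $\deg \mathfrak{b} = me + 1$, while intersecting with a fiber gives $a = C \cdot \mathfrak{f}$, the degree of the projection $X \to Y$. The coefficient $a$ I would pin down by adjunction: substituting $C$ and $K_S$ into $2g - 2 = (K_S + C)\cdot C$, and using $\deg\mathfrak{b} = me+1$ and $g = \binom{m}{2}e + m\gamma$, all terms collect into the factored identity
\[
 (a - m)\bigl(-e(a-m) + e + 2\gamma\bigr) = 0 \, .
\]
The second factor vanishes only for $a - m = 1 + 2\gamma/e$, which is not an integer since $0 < 2\gamma/e < 1$ (here I use $e \geq 2\gamma + 1$ and $\gamma \geq 1$); as $a$ is an integer, this forces $a = m$.

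With $a = m$ and $\deg \mathfrak{b} = me + 1$, a final intersection computation gives $C \cdot Y_0 = m\,Y_0^2 + \deg \mathfrak{b} = 1$, so $C$ meets $Y_0$ transversally in a single point $q_0$; set $q := f(q_0) \in Y$. Restricting $\mathcal{O}_S(C)$ to $Y_0 \cong Y$ and using $\mathcal{N}_{Y_0/S} \cong \mathcal{O}_Y(-E)$ yields
\[
 \mathcal{O}_Y(q) \cong \mathcal{O}_S(m\,Y_0 + \mathfrak{b}\mathfrak{f})|_{Y_0} \cong \mathcal{O}_Y(-mE) \otimes \mathcal{O}_Y(\mathfrak{b}) \, ,
\]
whence $\mathfrak{b} \sim mE + q$ and therefore $C \sim m\,Y_0 + (mE + q)\mathfrak{f}$, as claimed.

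The main obstacle is the adjunction step: the whole proposition hinges on forcing $a = m$, i.e.\ on showing the projection $X \to Y$ has degree exactly $m$. The algebra is routine, but one must check that the quadratic in $a$ factors as above and, crucially, that the hypothesis $e \geq 2\gamma + 1$ eliminates the spurious root $a = m + 1 + 2\gamma/e$ on integrality grounds; without the bound $e > 2\gamma$ a second integral value of $a$ could occur. A secondary point needing care is the isomorphism $C \cong X$, which is what legitimizes using the true genus $g$ (rather than an arithmetic genus) in adjunction, and which follows from the normality of $X$ together with the fact that $C$ avoids $Y_0$.
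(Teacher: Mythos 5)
Your proof is correct and follows essentially the same route as the paper's: write $C \sim aY_0 + B\mathfrak{f}$, get $\deg B = me+1$ from the hyperplane class, force $a = m$ by adjunction plus the integrality argument ruling out the root $m + 1 + 2\gamma/e$ (your factored form is exactly the paper's quadratic $ea^2 - ((2m+1)e+2\gamma)a + m(m+1)e + 2m\gamma = 0$), and then restrict $\mathcal{O}_S(C)$ to $Y_0$ using $C \cdot Y_0 = 1$ to conclude $B \sim mE + q$. The only differences are cosmetic: you cite $\mathcal{N}_{Y_0/S} \cong \mathcal{O}_Y(-E)$ as standard where the paper derives it via a pushforward/Grauert argument, and you make explicit (correctly, via Zariski's main theorem) the isomorphism $C \cong X$ that the paper uses implicitly when applying adjunction with the genus $g$.
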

\begin{proof}
Since $S$ is the blow-up of $F$ at its vertex, it must be a decomposable ruled surface over $Y$. Since $Y_0$ is the section of minimal self-intersection of $f : S \to Y$, we must have that $\deg E = -Y^2_0 = \deg Y = e$. The Picard group of $S$ is generated by $Y_0$ and the pull-backs via $f^{\ast}$ of the divisors on $Y$. Hence, $C \sim a Y_0 + B \mathfrak{f}$ for a divisor $B$ on $Y$. For the degree of $X$ we have
\[
 me + 1 = \deg X = (Y_0 + E\mathfrak{f}) \cdot (aY_0 + B\mathfrak{f}) = -ae + ae + \deg B \, ,
\]
so $\deg B = me + 1$. Applying the adjunction theorem for $C$ we get
\[
 \begin{aligned}
  2g-2 & = (K_{C} + C) \cdot C \\
  & = (-2Y_0 + (K_{Y} - E) \mathfrak{f} + aY_0 + B \mathfrak{f}) \cdot (a Y_0 + B \mathfrak{f}) \\
  & = ((a-2)Y_0 + (K_{Y} - E + B)\mathfrak{f}) \cdot (a Y_0 + B \mathfrak{f}) \\
  & = a(a-2)(-e) + (me+1)(a-2) + a(2\gamma-2 -e + me+1) \\
  & = -ea^2 + 2ae + (me+1)a - 2me-2 + (2\gamma-1+(m-1)e)a \, .
 \end{aligned}
\]
Since $2g-2 = m(m-1)e + 2m\gamma - 2$, we obtain
\begin{equation}\label{Sec2EqFora}
 ea^2 - ((2m+1)e+2\gamma)a + m(m+1)e + 2m\gamma = 0 \, .
\end{equation}
Solving {\rm (\ref{Sec2EqFora})} for $a$ we obtain solutions $a = m$ and $a = m+1 + \frac{2\gamma}{e}$. Since $e \geq 2\gamma + 1$, the second number is not an integer, so $a = m$ is the only solution.

It remains to prove the claim about $B$, that is, $B \sim mE + q$ for some point $q \in Y$. An argument similar to that in \cite[Prop. V.2.6, p.371]{Hart77} shows that $j_{\ast} \mathcal{O}_{Y_0} (Y_0) \cong \mathcal{O}_{Y} (-E)$, where $j$ is the isomorphism $j : Y_0 \to Y$. Namely, consider the exact sequence
\[
 0 \to \mathcal{O}_{S} \to \mathcal{O}_{S}(Y_0) \to \mathcal{O}_{Y_0} \otimes \mathcal{O}_{S} (Y_0) \to 0 \, ,
\]
and push it down to $Y$. By Grauert's theorem we have
\[
 0 \to f_{\ast} \mathcal{O}_{S} \to f_{\ast} \mathcal{O}_{S}(Y_0) \to j_{\ast} ( \mathcal{O}_{Y_0} (Y_0) ) \to 0 \, .
\]
Since  $f_{\ast} \mathcal{O}_{S} \cong \mathcal{O}_{Y}$
and $f_{\ast} \mathcal{O}_{S}(Y_0) \cong \mathcal{O}_{Y} \oplus \mathcal{O}_{Y} (-E)$, we get $j_{\ast} \mathcal{O}_{Y_0} (Y_0) \cong \mathcal{O}_{Y} (-E)$. Further, $C$ is a smooth curve on $S$ and $C \cdot Y_0 = (m Y_0 + B \mathfrak{f}) \cdot Y_0 = 1$, so $C$ intersects $Y_0$ in a single point, say $z = C \cap Y_0$. Since $C \sim m Y_0 + B \mathfrak{f}$, the restrictions $C_{|_{Y_0}}$ and $(m Y_0 + B \mathfrak{f})_{|_{Y_0}}$ must be linearly equivalent too. Hence,
\[
 z \sim (m Y_0 + B \mathfrak{f})_{|_{Y_0}} \, ,
\]
or equivalently, $j (z) \sim -mE + B$ on $Y_0$. Taking $q := j (z)$, we obtain $B \sim mE + q$.
\end{proof}

In the proof of the main theorem in section \ref{Sec3} we will need the exact form of $\varphi_{\ast} \mathcal{O}_{X_m}$ and $\varphi_{\ast} (\mathcal{O}_{X_m} (P))$ for $m = 3$.  The statement giving the explicit expressions of those bundles develops an idea encountered in \cite[Proposition 2.2]{FP05MN}. Due to obvious reasons, we give a formulation and a proof only in the case $m = 3$, which is sufficient for our purposes.

\begin{prop}\label{Sec2PropPushForwardStrSheaf}
Assume the conditions and notations {\rm (MS)} and {\rm (AN)}. Fix $m = 3$ and denote $C_3 =: C$ and $X_3  =: X$. Then
\begin{enumerate}[label=\normalfont(\alph*), leftmargin=*, font=\rmfamily]
 \item $\varphi_{\ast} (\mathcal{O}_{X} (P)) \cong \mathcal{O}_{Y} \oplus \mathcal{O}_{Y} (-1) \oplus ( \mathcal{O}_{Y} (-2) \otimes \mathcal{O}_{Y} (-Q) )$,

 \item $\varphi_{\ast} \mathcal{O}_{X} \cong \mathcal{O}_{Y} \oplus (\mathcal{O}_{Y} (-1) \otimes \mathcal{O}_{Y} (-Q) )\oplus ( \mathcal{O}_{Y} (-2) \otimes \mathcal{O}_{Y} (-Q) )$.
\end{enumerate}
\end{prop}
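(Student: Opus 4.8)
The plan is to transport everything to the ruled surface $S$ via the isomorphism $\Psi$ and the commutative diagram {\rm (\ref{Sec2PropCommutDiag})}, and then to compute the pushforwards along $f : S \to \Gamma$ directly from the structure sequence of $C$. Since $\Psi|_C : C \to X$ and $\Psi|_D : D \to Y$ are isomorphisms that carry $q_0 = C \cap \Gamma_0$ to $P$ and identify $\phi$ with $\varphi$, and since $f|_C = \nu$ agrees with $\phi$ under the identification $D \cong \Gamma$ coming from $\sigma_D$, it suffices to compute $f_\ast \mathcal{O}_C$ and $f_\ast (\mathcal{O}_C (\Gamma_0))$ as bundles on $\Gamma$, where $\mathcal{O}_C (\Gamma_0) = \mathcal{O}_S (\Gamma_0)|_C \cong \mathcal{O}_C (q_0)$. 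At the end I would translate the answer back to $Y$ through $\Psi|_D$ using the dictionary $\mathcal{O}_{\Gamma} (E) \leftrightarrow \mathcal{O}_Y (1)$ and $q \leftrightarrow Q$.

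For the computation I start from
\[
 0 \to \mathcal{O}_S (-C) \to \mathcal{O}_S \to \mathcal{O}_C \to 0 \, , \qquad C \sim 3\Gamma_0 + (3E + q)\mathfrak{f} \, ,
\]
and tensor it with the line bundle whose pushforward I want: $\mathcal{O}_S$ for {\rm (b)}, and $\mathcal{O}_S (\Gamma_0)$ for {\rm (a)}. Applying $f_\ast$ and $R^1 f_\ast$, I would use two inputs: the elementary formula $f_\ast \mathcal{O}_S (n\Gamma_0 + B\mathfrak{f}) \cong \operatorname{Sym}^n (\mathcal{O}_{\Gamma} \oplus \mathcal{O}_{\Gamma} (-E)) \otimes \mathcal{O}_{\Gamma} (B)$ for $n \geq 0$ (valid because $\mathcal{O}_S (\Gamma_0) = \mathcal{O}_{\mathbb{P} (\mathcal{O}_{\Gamma} \oplus \mathcal{O}_{\Gamma} (-E))} (1)$ with $\mathcal{O}_{\Gamma} \oplus \mathcal{O}_{\Gamma} (-E)$ normalized), together with relative duality $R^1 f_\ast \mathcal{G} \cong (f_\ast (\mathcal{G}^\vee \otimes \omega_{S/\Gamma}))^\vee$, where $\omega_{S/\Gamma} \cong \mathcal{O}_S (-2\Gamma_0 - E\mathfrak{f})$ follows from $K_S \sim -2\Gamma_0 + (K_{\Gamma} - E)\mathfrak{f}$ established in the proof of {\rm Proposition \ref{Sec2PropRamBranch}}. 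All sheaves involved restrict to bundles of constant cohomology on the $\mathbb{P}^1$-fibers, so cohomology-and-base-change makes the relevant direct images locally free and duality applies. Carrying this out, in case {\rm (b)} one gets $f_\ast \mathcal{O}_S (-C) = 0$, $f_\ast \mathcal{O}_S = \mathcal{O}_{\Gamma}$, $R^1 f_\ast \mathcal{O}_S = 0$, and $R^1 f_\ast \mathcal{O}_S (-C) \cong (f_\ast \mathcal{O}_S (\Gamma_0 + (2E+q)\mathfrak{f}))^\vee \cong \mathcal{O}_{\Gamma} (-E-q) \oplus \mathcal{O}_{\Gamma} (-2E-q)$, giving
\[
 0 \to \mathcal{O}_{\Gamma} \to f_\ast \mathcal{O}_C \to \mathcal{O}_{\Gamma} (-E-q) \oplus \mathcal{O}_{\Gamma} (-2E-q) \to 0 \, ;
\]
in case {\rm (a)}, tensoring by $\mathcal{O}_S (\Gamma_0)$ gives $f_\ast \mathcal{O}_S (\Gamma_0) = \mathcal{O}_{\Gamma} \oplus \mathcal{O}_{\Gamma} (-E)$, $R^1 f_\ast \mathcal{O}_S (\Gamma_0) = 0$, $f_\ast \mathcal{O}_S (\Gamma_0 - C) = 0$, and $R^1 f_\ast \mathcal{O}_S (\Gamma_0 - C) \cong (f_\ast \mathcal{O}_S ((2E+q)\mathfrak{f}))^\vee \cong \mathcal{O}_{\Gamma} (-2E-q)$, giving
\[
 0 \to \mathcal{O}_{\Gamma} \oplus \mathcal{O}_{\Gamma} (-E) \to f_\ast \mathcal{O}_C (\Gamma_0) \to \mathcal{O}_{\Gamma} (-2E-q) \to 0 \, .
\]

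The remaining and main point is that both extensions split. For {\rm (b)} this is automatic from the trace: in characteristic $0$ the composite $\mathcal{O}_{\Gamma} \hookrightarrow f_\ast \mathcal{O}_C \xrightarrow{\frac{1}{3}\operatorname{tr}} \mathcal{O}_{\Gamma}$ is the identity. For {\rm (a)} there is no trace, and this is the crux of the argument; I would instead show the extension class vanishes. It lives in
\[
 \Ext^1_{\Gamma} (\mathcal{O}_{\Gamma} (-2E-q),\, \mathcal{O}_{\Gamma} \oplus \mathcal{O}_{\Gamma} (-E)) \cong H^1 (\Gamma, \mathcal{O}_{\Gamma} (2E+q)) \oplus H^1 (\Gamma, \mathcal{O}_{\Gamma} (E+q)) \, ,
\]
and both summands are zero, since $\deg (2E+q) = 2e+1$ and $\deg (E+q) = e+1$ both exceed $2\gamma - 2$ under the standing hypothesis $e \geq 2\gamma + 1$, so $H^1$ vanishes by Serre duality. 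Hence $f_\ast \mathcal{O}_C (\Gamma_0) \cong \mathcal{O}_{\Gamma} \oplus \mathcal{O}_{\Gamma} (-E) \oplus \mathcal{O}_{\Gamma} (-2E-q)$. Translating both results to $Y$ through $\Psi|_D$ via $\mathcal{O}_{\Gamma} (E) \leftrightarrow \mathcal{O}_Y (1)$ and $q \leftrightarrow Q$ yields exactly the decompositions claimed in {\rm (a)} and {\rm (b)}; as a consistency check, the determinant of the answer in {\rm (b)} reproduces $\det \varphi_\ast \mathcal{O}_X \cong \mathcal{O}_Y (-B_{\varphi})$ with $B_{\varphi}$ as in {\rm Proposition \ref{Sec2PropRamBranch}(d)}.
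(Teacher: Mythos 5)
Your proof is correct, and it reaches the result by a genuinely different route than the paper at the two key identification steps, even though the overall skeleton (transport to $S$ via $\Psi$, reduce to computing $\nu_{\ast}\mathcal{O}_C$ and $\nu_{\ast}\mathcal{O}_C(q_0)$ on $\Gamma$, push the restriction sequence down by $f_{\ast}$) is the same. For part (a) the paper also pushes down $0 \to \mathcal{O}_S(\Gamma_0 - C) \to \mathcal{O}_S(\Gamma_0) \to \iota_{\ast}\mathcal{O}_C(q_0) \to 0$, but it identifies the line bundle $R^1 f_{\ast}\mathcal{O}_S(\Gamma_0 - C)$ indirectly: Grauert gives that it is a line bundle, and its class is extracted from $\det(\nu_{\ast}\mathcal{O}_C(q_0))$, computed via the branch divisor of the triple cover ({\rm Remark \ref{Sec2RemBnu}} together with \cite[Ex.\ IV.2.6]{Hart77}); you instead get it in one stroke from relative duality $R^1 f_{\ast}\mathcal{G} \cong (f_{\ast}(\mathcal{G}^{\vee}\otimes\omega_{S/\Gamma}))^{\vee}$, and your splitting step --- the vanishing of $\Ext^1(\mathcal{O}_{\Gamma}(-2E-q), \mathcal{O}_{\Gamma}\oplus\mathcal{O}_{\Gamma}(-E)) = H^1(\mathcal{O}_{\Gamma}(2E+q))\oplus H^1(\mathcal{O}_{\Gamma}(E+q))$ via $e \geq 2\gamma+1$ --- is exactly the paper's. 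The divergence is larger in part (b): the paper invokes the Casnati--Ekedahl structure theorem for triple covers (the Tschirnhausen module of $\nu$ is $(\mathcal{O}_{\Gamma}\oplus\mathcal{O}_{\Gamma}(-E))\otimes\mathcal{L}$ because $C \hookrightarrow \mathbb{P}(\mathcal{O}_{\Gamma}\oplus\mathcal{O}_{\Gamma}(-E))$), pins down $\mathcal{L} \cong \mathcal{O}_{\Gamma}(-E-q)$ by the determinant, and also cites Miranda; you avoid this external structure theory entirely by computing $R^1 f_{\ast}\mathcal{O}_S(-C) \cong \mathcal{O}_{\Gamma}(-E-q)\oplus\mathcal{O}_{\Gamma}(-2E-q)$ via the same duality formula --- so the quotient arrives already split --- and splitting off $\mathcal{O}_{\Gamma}$ by the trace (legitimate over $\mathbb{C}$; in fact even the trace is dispensable, since $\Ext^1(\mathcal{O}_{\Gamma}(-E-q)\oplus\mathcal{O}_{\Gamma}(-2E-q), \mathcal{O}_{\Gamma})$ vanishes by the same degree estimates). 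What each approach buys: the paper's argument exhibits the answer as an instance of general covering theory (the Tschirnhausen module and branch-divisor bookkeeping, which also organize the computations for general $m$ elsewhere in section 2), while yours is more elementary and self-contained, treats (a) and (b) uniformly with the single input $f_{\ast}\mathcal{O}_S(n\Gamma_0 + B\mathfrak{f}) \cong \operatorname{Sym}^n(\mathcal{O}_{\Gamma}\oplus\mathcal{O}_{\Gamma}(-E))\otimes\mathcal{O}_{\Gamma}(B)$ plus relative duality, and your closing consistency check against $\det\varphi_{\ast}\mathcal{O}_X \cong \mathcal{O}_Y(-B_{\varphi})$ from {\rm Proposition \ref{Sec2PropRamBranch}(d)} recovers precisely the determinant computation on which the paper's proof pivots.
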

\begin{proof}
The equivalent statements about $\phi : C \to D$ appear as
\begin{enumerate}[label=(\alph*'), leftmargin=*, font=\rmfamily]
 \item $\phi_{\ast} (\mathcal{O}_{C} (\Gamma_0)) \cong \mathcal{O}_{D} \oplus \mathcal{O}_{D} (-E\mathfrak{f}) \oplus \mathcal{O}_{D} (-(2E+q)\mathfrak{f} )$,

 \item $\phi_{\ast} \mathcal{O}_{C} \cong \mathcal{O}_{D} \oplus \mathcal{O}_{D} (-(E+q)\mathfrak{f}) \oplus \mathcal{O}_{D} (-(2E+q)\mathfrak{f} )$.
\end{enumerate}
If we denote by $\nu$ the morphism $f_{|_C} : C \to \Gamma$, or equivalently, if $\iota$ is the embedding $\iota : C \hookrightarrow S$ and $\nu$ is the composition $f \circ \iota$, the two claims translate into
\begin{enumerate}[label=(\alph*''), leftmargin=*, font=\rmfamily]
 \item $\nu_{\ast} (\mathcal{O}_{C} (q_0)) \cong \mathcal{O}_{\Gamma} \oplus \mathcal{O}_{\Gamma} (-E ) \oplus \mathcal{O}_{\Gamma} (-2E-q)$,

 \item $\nu_{\ast} \mathcal{O}_{C} \cong \mathcal{O}_{\Gamma} \oplus \mathcal{O}_{\Gamma} (-E-q) \oplus \mathcal{O}_{\Gamma} (-2E-q)$.
\end{enumerate}
It is sufficient to prove claims {\rm (a'')} and {\rm (b'')}, which we will do next. We remark that claim {\rm (b'')} has been proven by Miranda for varieties of arbitrary dimension, see \cite[Proposition 8.1, p.1150]{Mir85}. Here we give a proof of it (for curves) as well, as it is easy to do in our context.

Since $C \in |3\Gamma_0 + (3E + q)\mathfrak{f}|$, there is an exact sequence
\[
 0 \to \mathcal{O}_{S} (-2\Gamma_0 -(3E +q)\mathfrak{f})
   \to \mathcal{O}_{S} ( \Gamma_0)
   \to \iota_{\ast} \mathcal{O}_{C} ( \Gamma_0) \equiv \iota_{\ast} \mathcal{O}_{C} ( q_0 )
   \to 0 \, .
\]
Pushing it down to $\Gamma$ via $f_{\ast}$, we get the exact sequence

{\footnotesize
\begin{equation}\label{PushForward}
\begin{tikzcd}
0 \arrow{r} &
f_{\ast} \mathcal{O}_{S} (-2\Gamma_0 -(3E +q)\mathfrak{f}) \arrow{r} &
f_{\ast} \mathcal{O}_{S} ( \Gamma_0) \arrow{r} \arrow[phantom, ""{coordinate, name=Z}]{d} &
f_{\ast}  \iota_{\ast} \mathcal{O}_{C} ( q_0 )
  \arrow[
    rounded corners,
    to path={
      -- ([xshift=2ex]\tikztostart.east)
      |- (Z) [near end]\tikztonodes
      -| ([xshift=-2ex]\tikztotarget.west)
      -- (\tikztotarget)
    }
  ]{dll} \\
& R^1 f_{\ast} \mathcal{O}_{S} (-2\Gamma_0 -(3E +q)\mathfrak{f}) \arrow{r} &
R^1 f_{\ast} \mathcal{O}_{S} ( \Gamma_0) \arrow{r} &
R^1 f_{\ast} \iota_{\ast} \mathcal{O}_{C} ( q_0)  &
\end{tikzcd}
\end{equation}
}

\bigskip

\noindent For every point $z \in \Gamma$ we have that $z\mathfrak{f} \cong \mathbb{P}^1$ and $\deg (-2\Gamma_0 -(3E +q)\mathfrak{f})_{|_{z\mathfrak{f}}}) = \deg (-2\Gamma_0 \cdot \mathfrak{f})= -2$, hence
\[
 h^i (z\mathfrak{f}, \mathcal{O}_{S} (-2\Gamma_0 -(3E +q)\mathfrak{f})_{|_{z\mathfrak{f}}}) = h^i (\mathbb{P}^1, \mathcal{O}_{\mathbb{P}^1} (-2)) =
 \begin{cases}
  0 & \mbox{ if } i = 0 \\
  1 & \mbox{ if } i = 1\, .
 \end{cases}
\]
By Grauert's theorem, see \cite[Theorem III.12.9]{Hart77}, it follows that the push-forward $f_{\ast} \mathcal{O}_{S} (-2\Gamma_0 -(3E +q)\mathfrak{f})$ vanishes, while $R^1 f_{\ast} \mathcal{O}_{S} (-2\Gamma_0 -(3E +q)\mathfrak{f})$ must be a locally free sheaf of rank one, that is, a line bundle on $\Gamma$. From the definition of $S$ we have $f_{\ast} \mathcal{O}_{S} ( \Gamma_0 ) \cong \mathcal{O}_{\Gamma} \oplus \mathcal{O}_{\Gamma} (-E)$, and since $h^1 (z\mathfrak{f}, \mathcal{O}_{S} (\Gamma_0)_{|_{z\mathfrak{f}}}) = h^1 (\mathbb{P}^1, \mathcal{O}_{\mathbb{P}^1} (1)) = 0$, the Grauert's theorem implies that {\rm (\ref{PushForward})} reduces to
\begin{equation}\label{PushForwardRed}
 0 \to \mathcal{O}_{\Gamma} \oplus \mathcal{O}_{\Gamma} (-E) \to \nu_{\ast} \mathcal{O}_{C} ( q_0 ) \to R^1 f_{\ast} \mathcal{O}_{S} (-2\Gamma_0 -(3E +q)\mathfrak{f}) \to 0 \, .
\end{equation}
Since $\nu : C \to \Gamma$ is a triple covering morphism, the push-forward $\nu_{\ast} \mathcal{O}_{C}$ must split as
\[
 \nu_{\ast} \mathcal{O}_{C} = \mathcal{O}_{\Gamma} \oplus \mathcal{E}^{\vee}
\]
where $\mathcal{E}$ is a vector bundle of rank two on $\Gamma$ for which its dual bundle $\mathcal{E}^{\vee}$ is the Tschirnhausen module of $\nu$. Denote $\beta := \det (\nu_{\ast} \mathcal{O}_{C}) = \det \mathcal{E}^{\vee}$. Using \cite[Ex. IV.2.6(d), p.306]{Hart77}, we obtain easily that $\deg \beta = -3e -2$. Since ${\Gamma_0}$ and $C$ meet exactly at the point $q_0$, which is mapped by $\nu$ into $q$ on $\Gamma$, it follows by \cite[Ex. IV.2.6(a), p.306]{Hart77} that
\[
 \det (\nu_{\ast} \mathcal{O}_{C} ( q_0)) \cong \det (\nu_{\ast} \mathcal{O}_{C}) \otimes \mathcal{O}_{\Gamma} (q) \cong \beta (q) \, .
\]
Therefore,  by \eqref{PushForwardRed},
\[
 \begin{aligned}
  R^1 f_{\ast} \mathcal{O}_{S} (-2\Gamma_0 -(3E +q)\mathfrak{f})
  & \cong \det (\nu_{\ast} \mathcal{O}_{C} ( q_0)) \otimes (\det (\mathcal{O}_{\Gamma} \oplus \mathcal{O}_{\Gamma} (-E))^{-1} \\
  & \cong \beta (q) \otimes \mathcal{O}_{\Gamma} (E) \\
  & \cong \beta (E+q) \, .
 \end{aligned}
\]
Since $\deg (\beta^{\vee} (-2E-q)) = e+1 > 2\gamma -2$, we have
{\small
\[
 \begin{aligned}
  \Ext^1 (R^1 f_{\ast} \mathcal{O}_{S} (-2\Gamma_0 -(3E +q)\mathfrak{f}), & \mathcal{O}_{\Gamma} \oplus \mathcal{O}_{\Gamma} (-E))
  = \Ext^1 (\beta (E+q), \mathcal{O}_{\Gamma} \oplus \mathcal{O}_{\Gamma} (-E)) \\
  & = H^1 (\Gamma , \beta^{\vee} (-E-q) \oplus \beta^{\vee} (-2E-q)) \\
  & = 0 \, .
 \end{aligned}
\]
}
This implies that the exact sequence {\rm (\ref{PushForwardRed})} splits, so we get
\begin{equation}\label{Sec2PropPushForwardSplitSeq}
 \nu_{\ast} \mathcal{O}_{C} (q_0) \cong \mathcal{O}_{\Gamma} \oplus \mathcal{O}_{\Gamma} (-E) \oplus \beta (E+q) \, .
\end{equation}

\noindent Since the Tschirnhausen module $\mathcal{E}^{\vee}$ is determined uniquely by the covering morphism $\nu : C \to \Gamma$ and since $\iota : C \hookrightarrow S = \mathbb{P} (\mathcal{O}_{\Gamma} \oplus \mathcal{O}_{\Gamma}(-E))$ is an embedding, it follows by \cite[Theorem 1.3, p.439]{CE96}  that
\[
    \nu_{\ast} \mathcal{O}_{C} \cong \mathcal{O}_{\Gamma} \oplus ((\mathcal{O}_{\Gamma} \oplus \mathcal{O}_{\Gamma}(-E)) \otimes \mathcal{L} )
\]
for some line bundle $\mathcal{L}$ on $\Gamma$. Using $\deg ( \det \nu_{\ast} \mathcal{O}_{C} ) = \deg \beta = -3e - 2$, we get $\deg \mathcal{L} = -e-1$. From
\[
 0 \to \nu_{\ast} \mathcal{O}_{C} \to \nu_{\ast} \mathcal{O}_{C} (q_0) \to \mathcal{O}_q \to 0 \, ,
\]
we obtain
\[
 0 \to \mathcal{O}_{\Gamma} \oplus \mathcal{L} \oplus \mathcal{L} (-E) \xrightarrow{\tau} \mathcal{O}_{\Gamma} \oplus \mathcal{O}_{\Gamma} (-E) \oplus \beta (E+q) \to \mathcal{O}_q\to 0 \, .
\]
Let $\iota_k$, for $k = 1, 2, 3$, denote the inclusion maps of the summands $\mathcal{O}_{\Gamma}$, $\mathcal{L}$ and $\mathcal{L} (-E)$ into the direct sum $\mathcal{O}_{\Gamma} \oplus \mathcal{L} \oplus \mathcal{L} (-E)$, respectively. Consider the projection $p_3 : \mathcal{O}_{\Gamma} \oplus \mathcal{O}_{\Gamma} (-E) \oplus \beta (E+q) \to \beta (E+q)$. Now, let's examine the compositions $p_3 \circ \tau \circ \iota_k$ for $k = 1, 2, 3$. Since the degree of $\mathcal{O}_{\Gamma}$ is zero, which is greater than $-2e-1$, the degree of $\beta (E+q)$, it follows that $p_3 \circ \tau \circ \iota_1$ is the zero map. Similarly, with $\deg \mathcal{L} = -e-1$, which is greater than $-2e-1$, the degree of $\beta(E+q)$, the map $p_3 \circ \tau \circ \iota_2$ is also the zero map. Therefore, it remains that the map $p_3 \circ \tau \circ \iota_3 : \mathcal{L} (-E) \to \beta (E+q)$ is nonzero. Given that $\deg \mathcal{L}(-E) = -2e-1 = \deg \beta(E+q)$, we conclude that $\mathcal{L} (-E) \cong \beta (E+q)$. Additionally, we have $\beta = \det (\nu_{\ast} \mathcal{O}_C) = \mathcal{L}^2 (-E)$, implying $\mathcal{L} \cong \mathcal{O}_{\Gamma} (-E-q)$. This implies $\beta \cong \mathcal{O}_{\Gamma} (-3E-2q)$. Hence, both statements (a) and (b) in the proposition are established.
\end{proof}

\medskip

Finally, we recall one more result that will be used in the proof of the {\rm Main Theorem} in section \ref{Sec3}.
\begin{prop}\label{PropInnerProjSESNormBund}(\cite[Proposition 2]{CIK22})
Let $X$ be a non-degenerate smooth integral curve in $\mathbb{P}^r$, where $r \geq 3$. Let $H$  be a hyperplane in $\mathbb{P}^r$ and $P$ be a point on $X$. Suppose that the inner projection $\varphi \, : \, X \to H \cong \mathbb{P}^{r-1}$ with center $P$ maps $X$ to a non-degenerate smooth integral curve $Y$ in $H$. Denote by $R_{\varphi}$ the ramification divisor of $\varphi$. Then
\[
    0 \to \mathcal{O}_X (R_{\varphi}) \otimes \mathcal{O}_X (1) \otimes \mathcal{O}_X (2P)
      \to N_{X / \mathbb{P}^r}
      \to \varphi^{\ast} N_{Y / \mathbb{P}^{r-1}} \otimes \mathcal{O}_X (P)
      \to 0 \, ,
\]
where $N_{X / \mathbb{P}^r}$ is the normal bundle of $X$ in $\mathbb{P}^r$ and $N_{Y / \mathbb{P}^{r-1}}$ is the normal bundle of $Y$ in $H \cong \mathbb{P}^{r-1}$.
\end{prop}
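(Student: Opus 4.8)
The plan is to deduce the sequence from two applications of the snake lemma, building everything out of Euler sequences and the relation $\varphi^{\ast}\mathcal{O}_Y(1) \cong \mathcal{O}_X(1) \otimes \mathcal{O}_X(-P)$ that holds for an inner projection with centre on the curve. First I would choose homogeneous coordinates $x_0, \ldots, x_r$ on $\mathbb{P}^r$ so that $P = [1:0:\cdots:0]$ and $H = \{x_0 = 0\}$, so that $\pi_P$ is literally $[x_0:\cdots:x_r] \mapsto [x_1:\cdots:x_r]$. Since each $x_i$ with $i \geq 1$ vanishes at $P$, its restriction to $X$ factors as $x_i|_X = s_P \cdot \bar{x}_i$, where $s_P \in H^0(\mathcal{O}_X(P))$ is the canonical section and $\bar{x}_i \in H^0(\mathcal{O}_X(1)(-P)) = H^0(\varphi^{\ast}\mathcal{O}_Y(1))$ is the pullback $\varphi^{\ast} y_i$ of the corresponding coordinate on $H$.

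The first step is to produce the intermediate sequence
\[
 0 \to \mathcal{O}_X(1) \otimes \mathcal{O}_X(P) \to T_{\mathbb{P}^r}|_X \xrightarrow{\ \rho\ } \varphi^{\ast}T_{\mathbb{P}^{r-1}}|_Y \otimes \mathcal{O}_X(P) \to 0 \, .
\]
I would obtain it by comparing the restricted Euler sequence $0 \to \mathcal{O}_X \xrightarrow{(x_0, \ldots, x_r)} \mathcal{O}_X(1)^{\oplus(r+1)} \to T_{\mathbb{P}^r}|_X \to 0$ with the $\mathcal{O}_X(P)$-twist of the $\varphi$-pullback of the Euler sequence of $\mathbb{P}^{r-1}$, namely $0 \to \mathcal{O}_X(P) \xrightarrow{(\bar{x}_1, \ldots, \bar{x}_r)} \mathcal{O}_X(1)^{\oplus r} \to \varphi^{\ast}T_{\mathbb{P}^{r-1}}|_Y \otimes \mathcal{O}_X(P) \to 0$. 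The vertical map on the middle terms is the projection that forgets the $x_0$-factor (injective cokernel zero, kernel $\mathcal{O}_X(1)$, and it is surjective), and on the left terms it is multiplication by $s_P$ (injective, cokernel the skyscraper $\mathbb{C}_P$); the identity $x_i|_X = s_P \bar{x}_i$ is exactly what makes the left square commute. The snake lemma then shows $\rho$ is surjective and identifies its kernel as the torsion-free extension of $\mathbb{C}_P$ by $\mathcal{O}_X(1)$, that is $\mathcal{O}_X(1) \otimes \mathcal{O}_X(P)$.

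The second step feeds this into the two tangent--normal sequences. I would form the commutative ladder whose top row is $0 \to T_X \to T_{\mathbb{P}^r}|_X \to N_{X/\mathbb{P}^r} \to 0$ and whose bottom row is the $\mathcal{O}_X(P)$-twist of the $\varphi$-pullback of $0 \to T_Y \to T_{\mathbb{P}^{r-1}}|_Y \to N_{Y/\mathbb{P}^{r-1}} \to 0$, with middle vertical map $\rho$. The left vertical map is $a := s_P \cdot d\varphi \colon T_X \to \varphi^{\ast}T_Y \otimes \mathcal{O}_X(P)$, and the commutativity of the left square is precisely the statement that the differential of the linear projection restricts on $T_X$ to $d\varphi$. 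Since $d\varphi$ realizes $T_X$ as $\varphi^{\ast}T_Y(-R_{\varphi})$, the image of $a$ is $\varphi^{\ast}T_Y \otimes \mathcal{O}_X(P) \otimes \mathcal{O}_X(-R_{\varphi} - P)$, so $a$ is injective with cokernel $\mathcal{O}_{R_{\varphi}+P}$. A second application of the snake lemma, using $\Ker a = 0$, $\Ker \rho \cong \mathcal{O}_X(1)(P)$ and $\mathrm{coker}\,\rho = 0$, shows that the induced map $b \colon N_{X/\mathbb{P}^r} \to \varphi^{\ast}N_{Y/\mathbb{P}^{r-1}} \otimes \mathcal{O}_X(P)$ is surjective and that its kernel sits in $0 \to \mathcal{O}_X(1) \otimes \mathcal{O}_X(P) \to \Ker b \to \mathcal{O}_{R_{\varphi}+P} \to 0$. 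As $\Ker b$ is a rank-one subsheaf of the locally free $N_{X/\mathbb{P}^r}$ it is a line bundle, and the extension forces $\Ker b \cong \mathcal{O}_X(R_{\varphi}) \otimes \mathcal{O}_X(1) \otimes \mathcal{O}_X(2P)$, which is the asserted sequence.

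The step I expect to be the main obstacle is making the second ladder honest: one has to check that $\rho$ from the first step genuinely restricts on $T_X \subset T_{\mathbb{P}^r}|_X$ to the twisted differential $a$ landing \emph{inside} $\varphi^{\ast}T_Y \otimes \mathcal{O}_X(P)$, and not merely inside $\varphi^{\ast}T_{\mathbb{P}^{r-1}}|_Y \otimes \mathcal{O}_X(P)$. This is geometrically the chain rule $d\pi_P|_X = d\varphi$, but turning it into a genuinely commuting square of sheaf maps requires tracking the $s_P$-twists consistently and verifying the compatibility at $P$ itself, where $X$ meets the centre of projection. Once the length count $\deg R_{\varphi} + 1$ of $\mathrm{coker}\,a$ is in place, the torsion-free identification of $\Ker b$ as the stated line bundle is automatic.
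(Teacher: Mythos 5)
Your proposal is correct and follows essentially the same route as the proof this paper relies on, namely \cite[Proposition~2]{CIK22} (the statement is imported here without reproof): the factorization $x_i|_X = s_P\,\bar{x}_i$, which is what gives $\varphi^{\ast}\mathcal{O}_Y(1)\cong\mathcal{O}_X(1)\otimes\mathcal{O}_X(-P)$ as used in Proposition~\ref{Sec2PropRamBranch}(c), the snake-lemma comparison of the restricted Euler sequence of $\mathbb{P}^r$ with the $\mathcal{O}_X(P)$-twisted pullback of the Euler sequence of $\mathbb{P}^{r-1}$ yielding $0\to\mathcal{O}_X(1)\otimes\mathcal{O}_X(P)\to T_{\mathbb{P}^r}|_X\to\varphi^{\ast}T_{\mathbb{P}^{r-1}}|_Y\otimes\mathcal{O}_X(P)\to 0$, and a second snake lemma on the tangent--normal ladder, with the kernel of $b$ identified as a line bundle by torsion-freeness inside $N_{X/\mathbb{P}^r}$ and pinned down by its quotient $\mathcal{O}_{R_{\varphi}+P}$. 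The subtlety you flag is handled exactly as you suggest: the square commutes on $X\setminus\{P\}$ by the chain rule for the linear projection, and it extends over $P$ because the composite $T_X\to\varphi^{\ast}N_{Y/\mathbb{P}^{r-1}}\otimes\mathcal{O}_X(P)$ vanishes on a dense open set and the target is torsion-free, so the factorization through the subbundle $\varphi^{\ast}T_Y\otimes\mathcal{O}_X(P)$ is automatic.
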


\bigskip

\section{ Proof of the main theorem }\label{Sec3}

Recall the basic numerical assumptions in the theorem: $\gamma \geq 3$ and $e \geq 4\gamma + 5$. Throughout this section we also fix
\[
  g := 3e + 3 \gamma \, , \quad d := 3e+1 = g - 3\gamma + 1 \quad \mbox{ and } \quad r := e-\gamma + 1 = \frac{g}{3} - 2\gamma + 1 \, .
\]
The technique used in the proof is derived from  \cite{CIK22}, \cite{CLM96} and \cite{Cil87}.
The proof itself proceeds in three main steps:
\begin{itemize}[font=\sffamily, leftmargin=1.8cm, style=nextline]
 \item[{\rm \bf Step I.}] We construct a family $\mathcal{F}$ of curves satisfying the characterization {\rm (iii)} in the main theorem, then we consider the closure $\mathcal{H}$ of the subset of $\mathcal{I}_{d, g, r}$ parametrizing
 the family $\mathcal F$ and show that
 \[
    \dim \mathcal{H} =  r^2 + 7e + 4 \, .
 \]

 \item[{\rm \bf Step II.}] For a general curve $X$ from the family $\mathcal{F}$ we show that
 \[
  \dim T_{[X]} \mathcal{H} = h^0 (X, N_{X / \mathbb{P}^{r}})
    = r^2 + 7e + 5 = \dim \mathcal{H} + 1 \, .
 \]

 \item[{\rm \bf Step III.}] We show that $\mathcal{H}$ forms an irreducible component of $\mathcal{I}_{d, g, r}$.
\end{itemize}

\bigskip

{\bf Step I.} Construction of the family.

Let $\Gamma \in \mathcal{M}_{\gamma}$ be a general curve of genus $\gamma$ and $E$ be a general divisor of degree $e \geq 4\gamma + 5$ on $\Gamma$. Let $q \in \Gamma$. Consider the ruled surface $S := \mathbb{P} (\mathcal{O}_{\Gamma} \oplus \mathcal{O}_{\Gamma}(-E))$ with natural projection $f : S \to \Gamma$. Denote by $\Gamma_0$ the section of minimal self-intersection on $S$, that is, $\Gamma^2_0 = -e$. As it was mentioned in section \ref{Sec2}, $\Pic (S) \cong \mathbb{Z}[\Gamma_0] \oplus {f}^{\ast} (\Pic (\Gamma))$. Just as there, for a divisor $\Delta \in \Div (\Gamma)$ we denote by $\Delta \mathfrak{f}$ the divisor ${f}^{\ast} (\Delta)$ on $S$. Consider the morphism $\Psi := \Psi_{|\Gamma_0 + E\mathfrak{f}|} : S \to \mathbb{P}^r$ determined by the linear series $\mathcal{O}_S (\Gamma_0 + E\mathfrak{f})$ on $S$. Define $\mathcal{F}$ as the family of curves that are images of the divisors from the linear series $|3{\Gamma}_0 + (3E + q){\mathfrak{f}}|$ on $S$ under the morphism $\Psi$, by varying $\Gamma$ in $\mathcal{M}_{\gamma}$, running $E$ through the set of general effective divisors of degree $e$ on $\Gamma$ and $q \in \Gamma$.
Note that  a general $X \in \mathcal{F}$ satisfies the properties ${\rm (iii)}$  in the theorem. More precisely, we get two properties
 ${\rm (iii)}. 1$ and ${\rm (iii)}.2$  from  Propositions \ref{Sec2PropCatGim} and  \ref{SecPropLine}, the property ${\rm (iii)}.3$ from the discussion just before Proposition  \ref{Sec2PropRamBranch},  and the property ${\rm (iii)}.4$  from   Proposition  \ref{Sec2PropRamBranch}, (a).

We now compute the dimension of $ \mathcal{F}$ in what follows:

\noindent $\dim \mathcal{F} = $
\begin{itemize}[font=\sffamily, leftmargin=1.3cm, style=nextline]
     \item[$ + $] $3\gamma - 3$ \ : \ number of parameters of curves $\Gamma \in \mathcal{M}_{ \gamma }$

     \item[$ + $] $\gamma$ \ : \ number of parameters of line bundles $\mathcal{O}_{\Gamma} (E) \in \Pic (\Gamma)$ of degree $e \geq 4\gamma + 5$
     necessary to fix the geometrically ruled surface $\mathbb{P} (\mathcal{O}_{\Gamma} \oplus \mathcal{O}_{\Gamma} (-E))$

     \item[$ + $] $(r+1)^2 - 1 = \dim (\Aut (\mathbb{P}^{r}))$

     \item[$ + $] $1$ \ : \ number of parameters necessary to fix $q \in \Gamma$

     \item[$ - $] $(e- \gamma + 2) = \dim G_{F}$, where $G_{F}$ is the subgroup of $\Aut (\mathbb{P}^{r})$ fixing the scroll $F$, see \cite[Lemma 6.4, p. 148]{CCFM2009}

     \item[$ + $] $6e - 3\gamma + 6 = \dim |3{\Gamma}_0 + (3E + q){\mathfrak{f}}|$ \ : \ number of parameters to choose a curve in the linear equivalence class of $3{\Gamma}_0 + (3E + q){\mathfrak{f}}$ on $S$.
\end{itemize}

\noindent Define $\mathcal{H}$ as the closure in $\mathcal{I}_{d, g, r}$ of the set parametrizing $\mathcal{F}$. Accounting the above numbers we get
\[
 \dim \mathcal{H} = \dim \mathcal{F}
 = r^2 + 7e + 4 \, ,
\]
whence {\rm Step I} is completed.

\medskip

{\bf Step II.} Computation of the tangent space to $\mathcal{H}$.

Let $X \in \mathcal{F}$ be a general curve from the family, that is, $X$ is the image $\Psi (C)$ of a general $C \in |3\Gamma_0 + (3E + q)\mathfrak{f}|$ on $S$, the base curve $\Gamma \in \mathcal{M}_{\gamma}$ is general, and $E \in \Div^e (\Gamma)$ and $q \in \Gamma$ are also general. Also, $X$ lies on the cone $F := \Psi (S)$ over a curve $Y \subset \mathbb{P}^{r-1}$ that is the image $Y := \Psi (D)$ of a general $D \in |\Gamma_0 + E\mathfrak{f}|$. Let $l_q$ be the line from the ruling of $F$ that is the image of $q\mathfrak{f}$ and $Q = l_q \cap Y$. Denote by $\varphi : X \to Y$ the projection with center $P$ of $X$ to the hyperplane containing $Y$. It is a $3:1$ covering morphism. Recall that by {\rm Proposition \ref{Sec2PropRamBranch}} its ramification divisor $R_{\varphi}$ is linearly equivalent to the divisor on $X$ cut by a quadric hypersurface and the two points, say $Q_1$ and $Q_2$, besides $P$, in which the line $l_q$ meets $X$. Applying {\rm Proposition \ref{PropInnerProjSESNormBund}}, we obtain the short exact sequence
\begin{equation}\label{Sec3InnerProjSESNormBund}
    0 \to \mathcal{O}_X (3) \otimes \mathcal{O}_X (Q_1 + Q_2 + 2P)
      \to N_{X / \mathbb{P}^r}
      \to \varphi^{\ast} N_{Y / \mathbb{P}^{r-1}} \otimes \mathcal{O}_X (P)
      \to 0 \, ,
\end{equation}
in which $N_{X / \mathbb{P}^r}$ is the normal bundle of $X$ in $\mathbb{P}^r$ and  $N_{Y / \mathbb{P}^{r-1}}$ is the normal bundle of $Y$ in $\mathbb{P}^{r-1}$. Due to $e \geq 2\gamma + 1$, the Hilbert scheme $\mathcal{I}_{e, \gamma, e-\gamma}$ is irreducible and generically smooth of the expected dimension $\dim \mathcal{I}_{e, \gamma, e-\gamma} = \lambda_{e, \gamma, r-1} = er - (r-4)(\gamma - 1)$. Since $\Gamma \in \mathcal{M}_{\gamma}$ is general and $Y$ is isomorphic to $\Gamma$ and $\deg Y = e$, it follows that
\[
    h^0 (Y, N_{Y / \mathbb{P}^{r-1}}) = er - (r-4)(\gamma - 1) \, .
\]

\noindent For the degree of the line bundle $\mathcal{O}_X (3) \otimes \mathcal{O}_X (Q_1 + Q_2 + 2P)$ in {\rm (\ref{Sec3InnerProjSESNormBund})} we have
\[
 \deg \left( \mathcal{O}_X (3) \otimes \mathcal{O}_X (Q_1 + Q_2 + 2P) \right)
 = 3 \deg X + 4 = 3(3e+1) + 4 = 9e + 7 \, .
\]
According to the assumptions in the theorem, $g = 3e + 3\gamma$ and $e \geq 4\gamma + 5$, so for the degree of the line bundle $\mathcal{O}_X (3) \otimes \mathcal{O}_X (Q_1 + Q_2 + 2P)$ we obtain $9e + 7 > 2g-2$. Thus, $\mathcal{O}_X (3) \otimes \mathcal{O}_X (Q_1 + Q_2 + 2P)$ is nonspecial, hence
{\small
\begin{equation}\label{h0NXinPr}
 h^0 (X, N_{X / \mathbb{P}^r}) = h^0 (X, \mathcal{O}_X (3) \otimes \mathcal{O}_X (Q_1 + Q_2 + 2P)) + h^0 (X, \varphi^{\ast} N_{Y / \mathbb{P}^{r-1}} \otimes \mathcal{O}_X (P)) \, .
\end{equation}
}
By the Riemann-Roch theorem
\[
 h^0 (X, \mathcal{O}_X (3) \otimes \mathcal{O}_X (Q_1 + Q_2 + 2P))
 = 6e - 3\gamma + 8 \, .
\]
To compute $h^0 (X, \varphi^{\ast} N_{Y / \mathbb{P}^{r-1}} \otimes \mathcal{O}_X (P) )$ we use the projection formula, that is,
\[
\begin{aligned}
 h^0 (X, \varphi^{\ast} N_{Y / \mathbb{P}^{r-1}} \otimes \mathcal{O}_X (P) )
 & = h^0 (Y, {\varphi}_{\ast} ( \, {{\varphi}}^{\ast} N_{Y / \mathbb{P}^{r-1}} \otimes \mathcal{O}_{X} (P) \, ) ) \\
 & = h^0 (Y, N_{Y / \mathbb{P}^{r-1}} \otimes {\varphi}_{\ast} \mathcal{O}_{X} (P) ) \, .
\end{aligned}
\]
By {\rm Proposition \ref{Sec2PropPushForwardStrSheaf}} we have $\varphi_{\ast} (\mathcal{O}_{X} (P)) \cong \mathcal{O}_{Y} \oplus \mathcal{O}_{Y} (-1) \oplus ( \mathcal{O}_{Y} (-2) \otimes \mathcal{O}_{Y} (-Q) )$, so it follows that
\begin{multline*}
 h^0 (Y, N_{Y / \mathbb{P}^{r-1}} \otimes {\varphi}_{\ast} \mathcal{O}_{X} (P) ) \\
 = h^0 (Y, N_{Y / \mathbb{P}^{r-1}}) + h^0 (Y, N_{Y / \mathbb{P}^{r-1}}(-1)) + h^0 (Y, N_{Y / \mathbb{P}^{r-1}}(-2) \otimes \mathcal{O}_{Y} (-Q) ) \, .
\end{multline*}
Since $Y \cong \Gamma$ is general in $\mathcal{M}_{\gamma}$, $\gamma \geq 3$, and $E$ is a general divisor of degree $e \geq 4\gamma+5$ on $\Gamma$, it follows by \cite[Proposition 2.1 and Proposition 2.12]{CLM96} that $h^0 (Y, N_{Y / \mathbb{P}^{r-1}} (-1)) = r$ and $h^0 (Y, N_{Y / \mathbb{P}^{r-1}} (-2)) = 0$. The last implies $h^0 (Y, N_{Y / \mathbb{P}^{r-1}} (-2) \otimes \mathcal{O}_{Y} (-Q) ) = 0$. Using that $r - 1 = e-\gamma$, we find
\[
\begin{aligned}
 h^0 (X, {{\varphi}}^{\ast} N_{Y / \mathbb{P}^{r-1}} \otimes \mathcal{O}_{X} (P))
 & = h^0 (Y, N_{Y / \mathbb{P}^{r-1}}) + h^0 (Y, N_{Y / \mathbb{P}^{r-1}}(-1)) \\
 & = er - (r-4)(\gamma - 1) + r \\
 & = r^2 + r + 4\gamma - 4 \, .
\end{aligned}
\]
The exact sequence {\rm (\ref{h0NXinPr})} then gives $h^0 (X, N_{X / \mathbb{P}^r}) = (6e - 3\gamma + 8) + (r^2 + r + 4\gamma - 4) = r^2 + r + \gamma + 6e + 4 = r^2 + (e - \gamma + 1) + \gamma + 6e + 4 = r^2 + 7e + 5$. Therefore,
\begin{equation}\label{Sec3DimTH}
 \dim T_{[X]} \mathcal{H} = \dim \mathcal{H}+1 = r^2 + 7e + 5 \, .
\end{equation}
This completes {\rm Step II}.

\medskip

{\bf Step III.} Showing that $\mathcal{H}$ forms an irreducible component of $\mathcal{I}_{d, g, r}$.

By definition, $\mathcal{H} \subset \mathcal{I}_{d,g,r}$ is the closure of the set parametrizing smooth integral curves of degree $d$ and genus $g$ on cones in $\mathbb{P}^{r}$ over the curves parametrized by $\mathcal{I}_{e, \gamma, r-1}$, as a general $[X] \in \mathcal{H}$ is in the linear equivalence class $3\Gamma_0 + (3E+q)\mathfrak{f}$ on the desingularization $S$ of a cone $F \subset \mathbb{P}^r$ over $Y \subset \mathbb{P}^{r-1}$ for a general $[Y] \in \mathcal{I}_{e, \gamma, r-1}$. The set $\mathcal{H}$ is clearly irreducible. To show that it is a component, we use that that every flat deformation of a curve from $\mathcal{F}$ is a again a curve on a cone in $\mathbb{P}^r$ over a curve from $\mathcal{I}_{e, \gamma, r-1}$.

\begin{lemma}\label{Sec3DeformLemma}
Let $p_{\mathcal{X}} : \mathcal{X} \to T$ be a flat family of projective curves in $\mathbb{P}^r$ for which there exists a closed point $t_0 \in T$ such that:
\begin{enumerate}[label=(\roman*), leftmargin=*, font=\rmfamily]
 \item  $\mathcal{X}_{t_0}$ is a smooth integral projectively normal curve of genus $g = 3e + 3\gamma$ and degree $3e + 1$;

 \item $\mathcal{X}_{t_0}$ is contained in a cone $F$ over a curve $Y$ corresponding to a general point of $\mathcal{I}_{e, \gamma, r-1}$.
\end{enumerate}
Then there is a neighborhood $U$ of $t_0$ in $T$ such that, for all closed points $t \in U$, $\mathcal{X}_t$ is again a curve on a cone over a smooth integral projectively normal curve of genus $\gamma$ and degree $e$ in $\mathbb{P}^{r-1}$.
\end{lemma}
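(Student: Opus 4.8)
The plan is to show that the homogeneous ideal of every nearby fibre contains exactly the same quadrics as the central fibre, that these quadrics cut out a surface which is a flat deformation of the cone $F$, and finally that this deformed surface is again a \emph{cone} over a smooth curve of genus $\gamma$ and degree $e$; once this is known, Proposition \ref{Sec2PropIdentifyCurves} will force the nearby fibre into the family $\mathcal{F}$. Writing $X_0 := \mathcal{X}_{t_0}$ and $X_t := \mathcal{X}_t$, I would first reduce everything to the constancy of cohomology. Smoothness, integrality, non-degeneracy and the Hilbert polynomial are preserved in a flat family, so after shrinking $T$ to a neighbourhood $U$ of $t_0$ I may assume every $X_t$ is a smooth integral non-degenerate curve of degree $3e+1$ and genus $g$. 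Since $X_0$ is projectively normal, $h^1(\mathbb{P}^r, \mathcal{I}_{X_0}(k)) = 0$ for all $k \geq 1$; as Castelnuovo--Mumford regularity is bounded in a flat family, only finitely many values of $k$ need checking, so semicontinuity gives $h^1(\mathbb{P}^r, \mathcal{I}_{X_t}(k)) = 0$ for all $k \geq 1$ on a (possibly smaller) $U$. Hence each $X_t$ is projectively normal and, the Euler characteristic being constant, $h^0(\mathbb{P}^r, \mathcal{I}_{X_t}(k)) = \chi(\mathcal{I}_{X_t}(k))$ is independent of $t$; in particular the space of quadrics $W_t := H^0(\mathbb{P}^r, \mathcal{I}_{X_t}(2))$ has constant dimension on $U$.

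Next I would identify $W_{t_0}$ with the equations of the cone. Because $e \geq 4\gamma+5 > 2\gamma+2$, the ideal $\mathcal{I}_Y$ of the general curve $Y$ is generated by quadrics, i.e.\ $\beta_{1,j}=2$ for every $j$ in \eqref{Sec2ResolY}. Feeding this into the Catalisano--Gimigliano resolution \eqref{Sec2ResolXm} with $m=3$ shows that the only degree-two generators of $\mathcal{I}_{X_0}$ are the $\beta_1$ quadrics inherited from $Y$ -- the remaining first-syzygy generators have degree $m+1=4$ -- so $W_{t_0} = H^0(\mathbb{P}^r, \mathcal{I}_F(2))$ and these quadrics cut out $F$ scheme-theoretically. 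By the constancy above, the $W_t$ glue to a subbundle $\mathcal{W} \subset H^0(\mathcal{O}_{\mathbb{P}^r}(2)) \otimes \mathcal{O}_U$, and I set $F_t$ to be the base locus of $W_t$, so that $X_t \subseteq F_t$ and $F_{t_0}=F$. Using upper semicontinuity of the fibre dimension of the total base locus over $U$ (equal to $2$ at $t_0$) together with $\chi$-constancy, after shrinking $U$ the $F_t$ form a flat family of arithmetically Cohen--Macaulay surfaces of degree $e$ deforming $F$, whose general hyperplane sections are projectively normal curves of degree $e$ and arithmetic genus $\gamma$ spanning $\mathbb{P}^{r-1}$.

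The main obstacle is to prove that $F_t$ is again a cone, i.e.\ that the vertex does not smooth out. My approach is to track the vertex as the common singular point of the quadric system: writing the quadrics of $W_t$ as symmetric matrices $A_i(t)$, the vertex of $F_{t_0}=F$ spans $\bigcap_i \ker A_i(t_0)$, and the generality of $Y$ guarantees that this common kernel is exactly the one-dimensional space $\langle P \rangle$. I would then argue that this common singular point persists for $t$ near $t_0$ -- this is the delicate point, since for a generic perturbation of the $A_i(t)$ the common kernel would drop to zero; here persistence must be forced by the presence of the smooth curve $X_t \subset F_t$ passing through the limit of the vertex and tangent there to a ruling with contact two, exactly as recorded in the Main Theorem. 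Granting a common singular point $P_t$, projection of $F_t$ from $P_t$ to $\mathbb{P}^{r-1}$ exhibits $F_t$ as the cone over a curve $Y_t$, which is smooth because $\mathcal{I}_{e,\gamma,r-1}$ is irreducible and generically smooth and $Y_t$ is a flat limit of its general members.

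Finally, once $F_t$ is known to be a cone over a smooth integral curve $Y_t$ of genus $\gamma$ and degree $e$ in $\mathbb{P}^{r-1}$, Proposition \ref{Sec2PropIdentifyCurves} applies to the curve $X_t$ of degree $3e+1$ and genus $3e+3\gamma$ on $F_t$: its proper transform on the desingularisation of $F_t$ is linearly equivalent to $3Y_0 + (3E_t+q_t)\mathfrak{f}$ for a suitable divisor $E_t$ and point $q_t$, which is precisely the class defining $\mathcal{F}$. Hence $X_t$ belongs to $\mathcal{F}$ for every $t \in U$, as claimed. Combined with Steps I and II, this shows that $\mathcal{H}$ is open in any component of $\mathcal{I}_{d,g,r}$ containing it and therefore, being closed, is itself a (non-reduced) component.
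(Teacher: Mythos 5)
Your proposal correctly sets up the first stage (projective normality is open in the family, the space of quadrics $W_t := H^0(\mathbb{P}^r,\mathcal{I}_{\mathcal{X}_t}(2))$ has constant dimension, and at $t_0$ these quadrics cut out exactly the cone $F$, by Green--Lazarsfeld and the Catalisano--Gimigliano resolution), and this matches the paper. But there are two genuine gaps where your argument diverges from the paper and does not close. First, the flatness of the family $t \mapsto F_t$ of quadric base loci is asserted, not proved: constancy of $\dim W_t$ does not control $\dim (J_t)_d$ for $d > 2$, where $J_t$ is the ideal generated by $W_t$, because the ranks of the multiplication maps $W_t \otimes H^0(\mathcal{O}_{\mathbb{P}^r}(d-2)) \to H^0(\mathcal{O}_{\mathbb{P}^r}(d))$ can jump at special $t$; ``upper semicontinuity of the fibre dimension together with $\chi$-constancy'' is precisely the statement in need of proof. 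The paper establishes it via Ciliberto's syzygy argument \cite{Cil87}: one takes a presentation of $I(\mathcal{X})$ specializing to the Catalisano--Gimigliano presentation at $t_0$, and the decisive point is that $I(\mathcal{X}_{t_0})$ is generated in degrees $2$ and $4$, so the block $\mathscr{M}_{2,1}$ of the presentation matrix would consist of forms of negative degree and hence vanishes; this is what lets one ``chop off'' the quadric strand {\rm (\ref{Sec3ResolIdealW})} and conclude $\dim (J_t)_d$ is constant for \emph{all} $d$. Your proposal never uses the degree-$4$ generators at all, and without this degree gap the argument fails --- indeed {\rm Remark \ref{Sec3Remark2-3case}} shows the whole method breaks when the generators sit in degrees $2$ and $3$, so any proof ignoring the generation degrees is suspect.

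Second, and more seriously, the step you yourself flag as ``the delicate point'' --- that the deformed surface $F_t$ is again a cone --- is left open, and your proposed rescue is circular. Tracking the vertex as the common kernel $\bigcap_i \ker A_i(t)$ cannot work by continuity, exactly as you observe: a generic perturbation kills the kernel. Appealing to ``the smooth curve $X_t \subset F_t$ passing through the limit of the vertex and tangent to a ruling'' assumes properties of $X_t$ (vertex, ruling, tangency) that are only known \emph{after} one knows $F_t$ is a cone; moreover, since $\dim T_{[X]}\mathcal{H} = \dim \mathcal{H} + 1$, there do exist first-order deformations pointing off the family, so no soft continuity argument can force the vertex to persist. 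The paper instead quotes \cite[Proposition 4.1]{CLM96}, which rests on Pinkham's theorem \cite[Theorem 7.5]{Pin74Ast} on deformations of cones over curves (varieties with $\mathbb{G}_m$-action): it is precisely the hypothesis $e \geq 4\gamma + 5$ that guarantees every flat deformation of such a cone is again a cone over a curve of the same genus and degree. This graded deformation-theoretic input is indispensable and has no substitute in your linear-algebra approach; as written, your proof establishes only that $X_t$ lies on a degree-$e$ surface degenerating to $F$, not that this surface is a cone.
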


Assuming the validity of {\rm Lemma \ref{Sec3DeformLemma}}, the proof of {\rm Step III} proceeds as follows. Suppose that
 $\tilde{X} \in U$
is a flat deformation of $X$. {\rm Lemma \ref{Sec3DeformLemma}} implies that $\tilde{X}$ is contained in a cone $\tilde{F} \subset \mathbb{P}^r$ over a curve $\tilde{Y}$, where $[\tilde{Y}] \in \mathcal{I}_{e, \gamma, r-1}$ is general. Let $\tilde{S}$ be the desingularization of $\tilde{F}$ and $\tilde{C}$ be the proper transform of $\tilde{X}$ on $\tilde{S}$. {\rm Proposition \ref{Sec2PropIdentifyCurves}} implies that $\tilde{C} \sim 3\tilde{Y}_0 + (3 \tilde{E} + \tilde{q})\tilde{\mathfrak{f}}$, where $\tilde{\mathfrak{f}} : \tilde{S} \to \tilde{Y}$ is the corresponding surjective morphism, $\tilde{Y}_0$ is the section of minimal self-intersection, $\tilde{Y}^2_0 = -e$, $\tilde{E}$ is a divisor of $\tilde{Y}$ of degree $e$ such that $\tilde{S} \cong \mathbb{P} (\mathcal{O}_{\tilde{Y}} \oplus \mathcal{O}_{\tilde{Y}} (-\tilde{E}))$ and $\tilde{q}$ is a point on $\tilde{Y}$. Also, $\tilde{X}$ is the image of a curve in the linear series $|3\tilde{Y}_0 + (3 \tilde{E} + \tilde{q})\tilde{\mathfrak{f}}|$ under the morphism associated to $|\tilde{Y}_0 + \tilde{E} \tilde{\mathfrak{f}}|$. Because of the definition of $\mathcal{F}$, the above means that $\tilde{X}$ is a curve from the same family. Therefore $\mathcal{H}$ is a component of $\mathcal{I}_{3e+1, g, e-\gamma+1}$.

To complete {\rm Step III} in the proof of the theorem it remains to prove the lemma.

\begin{proof}[Proof of Lemma \ref{Sec3DeformLemma}]
To a large extent our proof repeats the steps of the proofs of similar statements given in \cite[Proposition 1.6, p.354--356]{Cil87} and \cite[Proposition 4.1, p.176--178]{CLM96}. For this reason, we refer, whenever possible, to the statements formulated and proved there.
The statement is local, so we can assume that $T = \Spec (A)$ for a Noetherian ring $A$. Thus, we have a flat family
\[
 \mathcal{X} \subset \Proj A[x_0, x_1, \ldots , x_r] =: \mathbb{P}^r_A \, .
\]
Since projective normality is an ``open property'' and $\mathcal{X}_{t_0}$ is supposed to be projectively normal, we can assume further that the family $\mathcal{X}$ consists of projectively normal curves. By \cite[Ex. III.9.5, p.267]{Hart77} the family $\mathcal{X}$ must be \emph{very flat}. In particular, the number of generators in any degree $n$ of the ideal $I(\mathcal{X}_t)$ of a curve $\mathcal{X}_t \subset \mathbb{P}^r$ from the family is the same for all $t \in T$. Consider the homogeneous ideal $I(\mathcal{X})$ of $\mathcal{X}$ in the ring
\[
    R := A[x_0, x_1, \ldots , x_r]
\]
and let $I(\mathcal{X})_2$ be the vector space of its elements of degree two, that is,
\[
    I(\mathcal{X})_2 := H^0 (\mathbb{P}^r_A, \mathcal{I}_{\mathcal{X}} (2)) \, ,
\]
where $\mathcal{I}_{\mathcal{X}}$ is the ideal sheaf of $\mathcal{X}$. Take $J \subset R$ to be the ideal
\[
 J := \langle I(\mathcal{X})_2 \rangle \,
\]
generated by the elements of degree two. Consider the closed subscheme $\mathcal{W} \subset \mathbb{P}^r_A$ defined as
$
\mathcal{W} := \Proj (R / J) \subset \mathbb{P}^r_A \, .
$
It is indeed a family $p_\mathcal{W} : \mathcal{W} \to T$ parametrized by $T = \Spec (A)$ and we have a commutative diagram
\[
    \begin{tikzcd}[cells={anchor=west}]
      \mathcal{X} \subset \mathcal{W} \subset \mathbb{P}^r_A
   \ar[d, "{p_\mathcal{X}}"', start anchor={[shift={(12pt,-6pt)}]west}, end anchor={[shift={(18pt,-4pt)}]north west}]
   \ar[d, "{p_\mathcal{W}}", start anchor={[shift={(37pt,-6pt)}]west}, end anchor={[shift={(-28pt,-4pt)}]north east}] \\
    \Spec (A)
    \end{tikzcd}
\]
The goal is to show that $p_\mathcal{W} : \mathcal{W} \to T = \Spec (A)$ is a flat family.

By assumption $\mathcal{X}_{t_0}$ is a smooth curve of genus $g = 3e + 3\gamma$ and degree $3e + 1$ contained in a cone $F$ over a smooth integral projectively normal curve $Y$ of genus $\gamma$ and degree $e$ in $\mathbb{P}^{r-1}$. By {\rm Proposition \ref{Sec2PropIdentifyCurves}} this means that the proper transform of $\mathcal{X}_{t_0}$ on the desingularization $S$ of $F$ is in the linearly equivalence class of $3Y_0 + (3E + q)\mathfrak{f}$, where, just as before, $f : S \to Y$ is the surjective morphism for the decomposable ruled surface $S$, $Y_0$ is the section of minimal self-intersection, $E$ is a divisor of degree $e$ on $Y$ such that $S \cong \mathbb{P}(\mathcal{O}_Y \oplus \mathcal{O}_Y (-E))$ and $q \in Y$ is a point. Since $Y$ is a general curve of genus $\gamma \geq 3$ and degree $e \geq 4\gamma + 5$ in $\mathbb{P}^{r-1}$, it follows by \cite{GL86} that the first several terms of the minimal free resolution of its ideal sheaf $\mathcal{I}_{Y}$ appear as
\[
 \cdots \to \bigoplus\limits^{\beta_3}_{j=1} \mathcal{O}_{\mathbb{P}^{r-1}} (-4) \to \bigoplus\limits^{\beta_2}_{j=1} \mathcal{O}_{\mathbb{P}^{r-1}} (-3) \to \bigoplus\limits^{\beta_1}_{j=1} \mathcal{O}_{\mathbb{P}^{r-1}} (-2) \to \mathcal{I}_{Y} \to 0
\]
where $\beta_1, \beta_2, \ldots $ are the \emph{Betti numbers}. By \cite[Proposition 2, p. 232]{CG99} it follows that the first several terms of the minimal free resolution of the ideal sheaf $\mathcal{I}_{\mathcal{X}_{t_0}}$ of $\mathcal{X}_{t_0} \subset \mathbb{P}^r$ are
\begin{equation}\label{Sec3ResolIdSheafX}
  \begin{tikzcd}[row sep=0pt, cells={anchor=west}]
  \cdots \ar[r] & \mathcal{P}_3 \ar[r] & \mathcal{P}_2 \ar[r] & \mathcal{P}_1 \ar[r] & \mathcal{I}_{\mathcal{X}_{t_0}} \ar[r] & 0 \, ,
\end{tikzcd}
\end{equation}
where
\begin{itemize}
 \item $\mathcal{P}_1 = \bigoplus\limits^{r-1}_1 \mathcal{O}_{\mathbb{P}^{r}}(-4) \oplus \bigoplus\limits^{\beta_1}_{j=1} \mathcal{O}_{\mathbb{P}^{r}} (-2)$

 \item $\mathcal{P}_2 = \bigoplus\limits^{\binom{r-1}{2}}_1 \mathcal{O}_{\mathbb{P}^{r}}(-5) \oplus \bigoplus\limits^{\beta_2}_{j=1} \mathcal{O}_{\mathbb{P}^{r}} (-3) \oplus \bigoplus\limits^{\beta_1}_1 \mathcal{O}_{\mathbb{P}^{r}}(-5)$

 \item $\mathcal{P}_3 = \bigoplus\limits^{\binom{r-1}{3}}_1 \mathcal{O}_{\mathbb{P}^{r}}(-6) \oplus \bigoplus\limits^{\beta_3}_{j=1} \mathcal{O}_{\mathbb{P}^{r}} (-4) \oplus \bigoplus\limits^{\beta_2}_1 \mathcal{O}_{\mathbb{P}^{r}}(-6)$
\end{itemize}
To deduce the flatness of the family $p_{\mathcal{W}} : \mathcal{W} \to T$ we make use of resolutions of the ideal $I(\mathcal{X}) \subset R$ of $\mathcal{X}$, the ideal $I(\mathcal{X}_{t_0})$ of $\mathcal{X}_{t_0}$ in the localization $R_{t_0}$ of $R$ at $t_0$, and also of the ideal $J$ of $\mathcal{W} \subset \mathbb{P}^r_A$. Remark that due to {\rm (\ref{Sec3ResolIdSheafX})}, the ideal $I(\mathcal{X}_{t_0})$ has a presentation
\begin{equation}\label{Sec3ResolIdealX0}
  \begin{tikzcd}[row sep=0pt, cells={anchor=west}]
  P_2 \ar[r] & P_1 \ar[r] & I(\mathcal{X}_{t_0}) \ar[r] & 0 \, ,
\end{tikzcd}
\end{equation}
where
{\small $P_1 = \bigoplus\limits^{\beta_1}_{j=1} R_{t_0} (-2) \oplus \bigoplus\limits^{r-1}_1 R_{t_0} (-4)$}
and
{\small $P_2 = \bigoplus\limits^{\beta_2}_{j=1} R_{t_0} (-3) \oplus \bigoplus\limits^{\binom{r-1}{2} + \beta_1}_1 R_{t_0} (-5)$}. By the result of Catalisano and Gimigliano explained in section \ref{Sec2}, the zero locus of the degree two generators of $I(\mathcal{X}_{t_0})$ is precisely the cone $F$ containing the curve $\mathcal{X}_{t_0}$, that is, $V (J_{t_0}) \equiv F$, where $J_{t_0}$ is the ideal of the fiber $\mathcal{W}_{t_0}$ of $\mathcal{W}$ at the point $t_0$. Just like in the proof of \cite[Proposition 1.6]{Cil87} it is obtained that there is a commutative diagram

\begin{equation}\label{Sec3CommuDiagramX0W0}
\begin{tikzcd}[row sep=25pt, cells={anchor=west}]
  Q_2 \ar[r, "{\delta}"] \ar[d, hook] & Q_1 \ar[r] \ar[d, hook] & I(\mathcal{W}_{t_0}) \ar[r] \ar[d, hook] & 0 \\
  P_2 \ar[r, "{\theta}"] & P_1 \ar[r] & I(\mathcal{X}_{t_0}) \ar[r] & 0
\end{tikzcd}
\end{equation}

\noindent with exact rows, where
$Q_1 = \bigoplus\limits^{\beta_1}_{j=1} R_{t_0} (-2)$
and
$Q_2 = \bigoplus\limits^{\beta_2}_{j=1} R_{t_0} (-3)$, $\delta$ is represented by a $\beta_1 \times \beta_2$ matrix of linear forms and $\theta$ is represented by a matrix $M$ of the form
\[
 M = \begin{pmatrix}
  M_{1,1} & M_{1,2} \\
  0 & M_{2,2}
 \end{pmatrix} \, ,
\]
for which $M_{1,1}$ is a $\beta_1 \times \beta_2$ matrix of linear forms, $M_{1,2}$ is a $\beta_1 \times \left( \binom{r-1}{2} + \beta_1 \right)$ matrix and $M_{2,2}$ is a $(r-1) \times \left( \binom{r-1}{2} + \beta_1 \right)$ matrix. As it is explained in \cite[Proposition 1.6]{Cil87}, because of the very flatness of the family $p_{\mathcal{X}} : \mathcal{X} \to T$ there is a presentation
\begin{equation}\label{Sec3ResolIdealX}
  \begin{tikzcd}[row sep=0pt, cells={anchor=west}]
  \mathscr{P}_2 \ar[r, "{\Theta}"] & \mathscr{P}_1 \ar[r] & I(\mathcal{X}) \ar[r] & 0 \, ,
\end{tikzcd}
\end{equation}
of $I(\mathcal{X})$ by free $R$-modules such that the localization of the sequence {\rm (\ref{Sec3ResolIdealX})} at $k(t_0):= A/m_{t_0}$ gives {\rm (\ref{Sec3ResolIdealX0})}, where $m_{t_0} \subset A$ is the ideal corresponding to the point $t_0 \in T$ and $\Theta$ is a homogeneous map represented by a matrix $\mathscr{M}$ of the form
\[
 \mathscr{M} = \begin{pmatrix}
  \mathscr{M}_{1,1} & \mathscr{M}_{1,2} \\
  \mathscr{M}_{2,1} & \mathscr{M}_{2,2}
 \end{pmatrix} \, ,
\]
which modulo the ideal $m_{t_0}$ of $t_0$ gives the matrix $M$. The same degree reasoning argument as in \cite[Proposition 1.6]{Cil87} gives that $\mathscr{M}_{2,1} = 0$ and one can ``chop-off'' from {\rm (\ref{Sec3ResolIdealX})} an exact sequence
\begin{equation}\label{Sec3ResolIdealW}
  \begin{tikzcd}[row sep=0pt, cells={anchor=west}]
  \mathscr{Q}_2 \ar[r, "{\Delta}"] & \mathscr{Q}_1 \ar[r] & J \ar[r] & 0 \, ,
\end{tikzcd}
\end{equation}
where $\Delta$ is homogeneous map, such that tensoring {\rm (\ref{Sec3ResolIdealW})} with $k(t_0)$ we get the first row of {\rm (\ref{Sec3CommuDiagramX0W0})}. This means that the corank of the map $\Delta$ at each localization at $k(t) = A/m_t$, where $m_t$ is the ideal corresponding to $t \in T$, is same for all $t$, or equivalently, that $\dim (J_t)_d$ is same for all $t \in T$. This implies that the family $p_{\mathcal{W}} : \mathcal{W} \to T$ is (very) flat. In particular, it is a family of surfaces in $\mathbb{P}^r$ one of whose fibers, namely $\mathcal{W}_{t_0}$, is a cone over a smooth integral projectively normal curve in $\mathbb{P}^{r-1}$ of genus $\gamma \geq 3$ and degree $e \geq 4\gamma + 5$.

For the remaining part of the proof of the lemma we refer to \cite[Proposition 4.1, p.176--178]{CLM96}. It is proven there that if $p_{\mathcal{W}} : \mathcal{W} \to T$ is a flat family of surfaces in $\mathbb{P}^r$, one of whose fibers is a cone like $\mathcal{W}_{t_0}$ above, then the remaining fibers of the family are also cones over smooth curves of the same genus and degree in $\mathbb{P}^{r-1}$. We remark that the proof uses a result of Pinkham, namely \cite[Theorem 7.5, p.45]{Pin74Ast} about cones over curves of genus $\gamma$ and degree $e$, in which it is required that $e \geq 4\gamma + 5$. Thus the lemma is proved.\end{proof}

This completes Step III and thus the proof of the {\rm Main Theorem}.

\begin{remark}\label{Sec3Remark2-3case}
The technique used in {\rm Step III} of the proof can not be applied to prove \cite[Theorem B]{CIK22}. In that paper we considered a family of curves on cones such that each curve was a double cover of the base and also passed through the vertex on the cone containing it. Just as here, {\rm Proposition \ref{Sec2PropCatGim}} could be applied to obtain a resolution of the ideal of a curve from the family, however, the ideal is generated by polynomials of degree two and three,  which is insufficient to deduce the existence of a presentation like {\rm (\ref{Sec3ResolIdealW})} of the ideal of a similarly defined variety like $\mathcal{W}$ here. That is, one couldn't conclude that $\mathscr{M}_{2,1} = 0$, like we were able to do here due to $I(\mathcal{X}_t)$ being generated by polynomials of degree two and four. In a sense, our present work grew-out from our failure to apply the technique introduced by Ciliberto in \cite{Cil87} and used in \cite{CLM96} to the proof of \cite[Theorem B, Step III]{CIK22}, where we needed to use different arguments.
\end{remark}

\begin{remark}
For a component $\mathcal{D}$ of the Hilbert scheme $\mathcal{I}_{d,g,r}$ the difference $\sigma(\mathcal{D}) := \dim \mathcal{D} -\lambda_{d,g,r}$ is called \emph{superabundance}. It is not difficult to compute about our $\mathcal{H} \subset \mathcal{I}_{3e+1, 3e+3\gamma, e-\gamma+1}$ that $\sigma (\mathcal{H}) = (r-4)e+2(r-5)(e-r)-3$, and using the numerical assumptions in our {\rm Main Theorem}, $\sigma (\mathcal{H}) \geq 224$.
\end{remark}

\end{document}